\DeclareMathOperator{\Obs}{Obs}
\DeclareMathOperator{\per}{Per}
\newcommand{\W} {\mathcal W}
\DeclareMathOperator{\card}{card}
\theoremstyle{plain}
\newtheorem{thm}{Theorem}[section]
\newtheorem{cor}[thm]{Corollary}
\newtheorem{lem}[thm]{Lemma}
\newtheorem{prop}[thm]{Proposition}
\theoremstyle{definition}
\newtheorem{rmk}[thm]{Remark}
\newtheorem{ex}{Example}[section]
\newtheorem{df}{Definition}[section]
\theoremstyle{remark}
\newtheorem{que}{Question}[section]
\DeclareMathOperator{\diam}{diam}
\DeclareMathOperator{\len}{len}
\DeclareMathOperator{\dist}{dist}
\DeclareMathOperator{\ord}{ord}
\newcommand{\R}{\mathbb R}
\newcommand{\C}{\mathbb C}
\newcommand{\Z}{\mathbb Z}
\newcommand{\N}{\mathbb N}
\newcommand{\T}{\mathbb T}
\newcommand{\Suno}{\mathbb{S}^1}
\renewcommand{\epsilon}{\varepsilon}
\begin{document}

\author{M. Achigar, A. Artigue\footnote{Email: artigue@unorte.edu.uy. Adress: Departamento de Matemática y Estadística del Litoral, Universidad de la Rep\'ublica, Gral. Rivera 1350, Salto, Uruguay.}\; and I. Monteverde}
\title{Observing Expansive Maps}
\date{\monthyeardate\today}
\maketitle

\begin{abstract}
 We consider the problem of the observability of positively expansive maps by the time series associated to continuous real functions. For this purpose we prove a general result on the generic observability of a locally injective map of a compact metric space of finite topological dimension, extending earlier work by Gutman \cite{Gut16}. We apply this result to partially solve the problem of finding the minimal number of functions needed to  observe a positively expansive map. We prove that two functions are necessary and sufficient for positively expansive maps on tori.
\end{abstract}

\section{Introduction}
 Suppose that the states of a natural system are modeled by a compact metric space $(X,\dist)$. If we consider discrete time then the evolution of the model may be given by a continuous map $T\colon X\to X$,  where if $x\in X$ is an initial state then $Tx\in X$ represents the state of the system after a certain fixed time interval has elapsed. We say that a continuous function  $f\colon X\to\R$ \emph{observes} $T$ if $x\neq y$ implies $f(T^nx)\neq f(T^ny)$ for some $n\geq 0$. The function $f$ represents a measurement of the system and is called an \emph{observable} \cite{Gut16} or \emph{output function} \cite{Nerurkar}. A classical result on this topic, proved by Aeyels \cite{Ae} and Takens \cite{Takens}, essentially says that a generic $C^2$-function $f\colon X\to\R$ observes a $C^2$-diffeomorphism $T\colon X\to X$ of a smooth manifold $X$. 
 
 Several authors considered this problem from a topological viewpoint, for a continuous map $T$ on a finite dimensional compact metric space. 
 Jaworski \cite{Coorn}*{Theorem 8.3.1} proved that every homeomorphism $T$ without periodic points is observed by a generic $f$. In \cite{Nerurkar}*{Theorem 4.1} Nerurkar showed an analogous result but allowing finitely many points of each period. Recently, Gutman \cite{Gut16} proved that an injective map $T$ satisfying fewer hypothesis on periodic points (explained in Theorem \ref{teoGutmanHomeo}) is generically observed.

 In this paper we consider this problem for non-injective maps. In our main result, Theorem \ref{thmGutmanEndos}, we extend Gutman's Theorem for locally injective maps. The main difficulty for the observability of a non-injective map is simple to state: there may not be enough time to observe a pair of \emph{collapsing} points, i.e., two points $x\neq y$ such that $T^nx=T^ny$ for some $n\geq 1$. In Theorem \ref{thmGutmanEndos} we prove a generic observability criterion, but some pairs of collapsing points must be excluded. It is easy to see that for $Tz=z^2$ on the unit complex circle it is impossible to observe every pair of collapsing points. See Lemma \ref{lemaGrTop} for a generalization. In Proposition \ref{propObsNoGen} we show that for a non-injective map $T$ of a compact manifold the set of functions that observes $T$ is not dense, in particular, for these maps observability is not generic. The problem presented by collapsing points motivates to consider more observations, i.e., a function $f\colon X\to\R^m$. In Corollary \ref{corObsMuerto1} we show that if a function $f\colon X\to \R^m$ observes every pair of collapsing points then a small perturbation of $f$ observes every pair of distinct points.

 In \S \ref{secAppExpSys} we show that there are strong connections between the topics of observability and expansivity. We say that $T$ is \emph{positively expansive} if there is $\delta>0$ such that if $x\neq y$ then $\dist(T^nx,T^ny)>\delta$ for some $n\geq 0$. These maps have several important properties that will be exploited in this paper. For example, they cannot be injective \cite{CK} (see also \cite{AAM}) and if a compact metric space admits a positively expansive map then the space has finite topological dimension \cites{Ma,AH,Kato93}. Also, if the space is a compact manifold then every positively expansive map is conjugate to an infra-nilmanifold expanding endomorphism, see \cites{Hi88,Shub69}. In particular, positively expansive maps on tori are conjugate to linear expanding endomorphisms. For our purposes, it is remarkable that positively expansive maps satisfy the hypothesis of Theorem \ref{thmGutmanEndos}, see Proposition \ref{propObsPosExp}.
 
 Given a map $T\colon X\to X$ we introduce the \emph{observability number} of $T$, defined as the minimal natural number $m$ for which there is a function $f\colon X\to \R^m$ that observes $T$. The observability number will be denoted as $\Obs(T)$. In Theorem \ref{teoObsLinTn} we show that if $T\colon \T^d\to \T^d$ is a hyperbolic toral endomorphism then: $\Obs(T)=1$ if $T$ is invertible and $\Obs(T)=2$ if $T$ is non-invertible. As a consequence, in Corollary \ref{con2alcanza} we deduce that for every positively expansive map $T$ on a torus it holds that $\Obs(T)=2$.
 
 We also introduce the concept of \emph{strict observability}, motivated as follows. 
 Given that measurements in reality always 
 have an error, say $\epsilon>0$, it is natural to consider observations as different when the evaluations of the function $f\colon X\to\R^m$ are away from the fixed precision $\epsilon$. That is, we will require that $\|f(T^nx)-f(T^ny)\|>\epsilon$ for some $n\geq 0$, whenever $x\neq y$, and in this case we say that \emph{$f$ strictly observes $T$}. In Proposition \ref{propObsPosExp} we show that for positively expansive maps observability is equivalent to strict observability. 

 The authors thank Damián Ferraro for useful conversations in the initial stage of the present research and acknowledge his idea of linking expansivity with a strong form of observability, 
 the one we call strict observability. In Proposition \ref{propStObsPosExp} we show that positive expansivity is equivalent to strict observability. This supports the statement: \emph{we can only observe expansive systems}.
  
\section{Observability}\label{secObs}
 In this section we state Theorem \ref{thmGutmanEndos} and derive some consequences. Let $(X,\dist)$ be a compact metric space. Denote by $C(X,\R^m)$ the set of continuous functions $f\colon X\to\R^m$ endowed with the usual compact-open topology. For $m=1$ set $C(X)=C(X,\R)$. 

 In this paper we will consider the \emph{topological dimension} as defined in \cite{HW}. It will be denoted as $\dim X$. We will recall a special result for our purposes, which also characterizes the topological dimension. Suppose $f\in C(X,\R^m)$. A point $y\in f(X)$ is called an \emph{unstable value} of $f$ if for all $\delta>0$ there is $g\in C(X,\R^m)$ such that $\|f(x)- g(x)\| < \delta$ for every $x\in X$ and $y\notin g(X)$. Other points of $f(X)$ are called \emph{stable values}.
 
\begin{thm}[\cite{HW}*{Theorems VI.1 and VI.2 on pp.\;75 -- 77}]\label{teoCharTopDim}
 For $X$ a compact metric space the following statements are equivalent: 
 \begin{enumerate}
  \item $\dim X\leq d$,
  \item all the values of $f$ are unstable, for all $f\in C(X,\R^{d+1})$. 
 \end{enumerate}
\end{thm}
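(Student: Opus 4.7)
The plan is to prove the two implications separately, drawing on the standard toolkit of classical dimension theory as developed in Hurewicz--Wallman. Each direction is nontrivial, so I would organize the argument according to its natural content.

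For $(1) \Rightarrow (2)$, given $\dim X \leq d$, $f \in C(X,\R^{d+1})$, $y \in f(X)$, and $\delta > 0$, I would build an approximation $g$ whose image lies in a polyhedron of dimension at most $d$ sitting inside $\R^{d+1}$. First, cover $X$ by open sets $\{U_i\}$ on which $f$ oscillates by less than $\delta/2$; since $\dim X \leq d$, refine to a finite open cover of order at most $d+1$ (every point meets at most $d+1$ members). Pick $x_i \in U_i$, a subordinate partition of unity $\{\varphi_i\}$, and set $g_0(x) = \sum_i \varphi_i(x) f(x_i)$. Then $\|g_0 - f\|_\infty < \delta/2$ and $g_0(X)$ is contained in the image of the canonical map to the geometric realization of the nerve of $\{U_i\}$, which is a polyhedron $P \subset \R^{d+1}$ of dimension at most $d$. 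Since a $d$-dimensional polyhedron has empty interior in $\R^{d+1}$, a translation by a suitable vector $v$ with $\|v\|<\delta/2$ yields $y \notin P + v$, and then $g := g_0 + v$ is the desired approximation.

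For $(2) \Rightarrow (1)$, I argue the contrapositive. Assume $\dim X > d$. By the Eilenberg--Otto characterization of dimension, $X$ then admits an essential family of $d+1$ pairs of disjoint closed sets $(A_i, B_i)$: for any closed separators $L_i$ between $A_i$ and $B_i$, one has $\bigcap_{i=1}^{d+1} L_i \neq \emptyset$. Pick Urysohn functions $f_i \colon X \to [-1,1]$ with $f_i|_{A_i} = -1$ and $f_i|_{B_i} = 1$, and assemble $f = (f_1,\ldots,f_{d+1})$. I claim $0 \in f(X)$ is a stable value. Indeed, for any $g$ with $\|g - f\|_\infty < 1$, each coordinate satisfies $g_i < 0$ on $A_i$ and $g_i > 0$ on $B_i$, so $g_i^{-1}(0)$ is a closed separator of $A_i$ from $B_i$; essentiality then forces $g^{-1}(0) = \bigcap_i g_i^{-1}(0) \neq \emptyset$, so $0 \in g(X)$.

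The main delicate point is the construction in $(1) \Rightarrow (2)$: one must verify that the order bound coming from $\dim X \leq d$ really does produce a nerve of simplicial dimension at most $d$, and that the piecewise-linear image $g_0(X)$ is nowhere dense in $\R^{d+1}$, so that an arbitrarily small translation escapes the prescribed point $y$. The reverse direction is mostly a matter of unpacking definitions once the Eilenberg--Otto theorem is granted; the conceptual burden there lies in that background characterization, not in the step from essential families to stable values.
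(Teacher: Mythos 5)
This statement is quoted in the paper as a classical result, with a citation to Hurewicz--Wallman (Theorems VI.1 and VI.2); the paper gives no proof of its own, so there is nothing internal to compare your argument against. Judged on its own, your sketch is a correct and essentially standard proof. The direction $(1)\Rightarrow(2)$ is the classical partition-of-unity/nerve approximation: each $g_0(x)=\sum_i\varphi_i(x)f(x_i)$ is a convex combination of at most $d+1$ of the points $f(x_i)$, hence lies in a finite union of convex hulls of affine dimension at most $d$, which is closed and nowhere dense in $\R^{d+1}$, so a small translation removes the prescribed value $y$; note you do not actually need the image to be the realized nerve or the points to be in general position, only this affine-dimension bound, which is exactly the ``delicate point'' you flag. (This is the same device the paper itself borrows from \cite{HW}*{pp. 57--59} in the proof of Proposition \ref{haylugar}.) The direction $(2)\Rightarrow(1)$ via an essential family of $d+1$ pairs $(A_i,B_i)$, Urysohn functions, and the observation that $g_i^{-1}(0)$ is a separator whenever $\|g-f\|_\infty<1$ is also correct, including the check that $0\in f(X)$ itself. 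Two caveats about self-containedness rather than correctness: your first half invokes the covering-dimension property (finite open covers refine to covers of order $\leq d+1$), which for compact metric spaces is equivalent to the inductive dimension used by Hurewicz--Wallman but is itself a nontrivial theorem of that book; and the Eilenberg--Otto characterization you cite in the second half is of comparable depth to the statement being proved (Hurewicz--Wallman instead reach VI.2 through their theory of essential mappings into cubes and spheres). Since the theorem is imported background in this paper, resting on such classical results is entirely acceptable.
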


\begin{rmk}\label{obsDefTopDim}
 The concept of topological dimension has several 
 different equivalent definitions. In light of Theorem \ref{teoCharTopDim}, $\dim X=d$ if and 
 only if $d$ is the minimal number for which all the values of $f$ are unstable, 
 for all $f\in C(X,\R^{d+1})$. 
 We note that the (equivalent) definition used by Mañé 
 and Kato \cites{Ma,Kato93} for the study of expansive dynamics and by Gutman \cite{Gut15}
 is the one called \emph{covering dimension} \cite{HW}.
\end{rmk}

 Let $T\colon X\to X$ be a continuous map. We say that $T$ is \emph{locally injective} if for all $x\in X$ there is $\epsilon>0$ such that the restriction of $T$ to the ball $B_\epsilon(x)$ is injective. For a function $f\in C(X,\R^m)$ and $n\in\N$ we define $f_0^n\colon X\to\R^{m(n+1)}$ as 
  $$f_0^n(x)=\bigl(f(x),f(Tx),\ldots,f(T^nx)\bigr).$$
 The function $f_0^n$ represents a sequence of $n+1$ observations, usually called \emph{time series}. 
 
\begin{df}
 Given a set $W\subseteq X\times X$ and $n\in\N$, we say that $f\in C(X,\R^m)$ \emph{observes $T$ on $W$ in $n$ steps} if $f_0^n(x)\neq f_0^n(y)$ for all $(x,y)\in W$. For the case $W=X\times X\setminus\Delta$, where $\Delta=\{(x,x):x\in X\}$, we simply say that \emph{$f$ observes $T$ in $n$ steps.} 
\end{df}

 This definition represents the idea that the time series distinguishes pairs of points of $W$ in a bounded time interval. Given $n\in\N$ define the following sets 
  $$\Delta_n(T)= \bigl\{(x,y)\in X\times X:T^nx=T^ny\bigr\},$$
 and for $n\geq1$
  $$\per_n(T)=\{x\in X:T^jx=x\text{ for some }1\leq j\leq n\}.$$

 Now we can state our main result.		
 
\begin{thm}\label{thmGutmanEndos}
 Let $T\colon X\to X$ be a locally injective continuous map on a compact metric space $X$ of $\dim X\leq d$ such that $\dim \per_n(T)<n/2$ for $n=1,\ldots,2d$. Then the set 
  $$\Omega=\bigl\{f\in C(X):f\text{ observes }T\text{ on }X\times X\setminus \Delta_{2d}(T)\text{ in }2d\text{ steps}\bigr\}$$
 is residual.
\end{thm}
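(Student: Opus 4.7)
The plan is to prove the residuality of $\Omega$ by Baire category in the complete metric space $(C(X),\|\cdot\|_\infty)$, expressing it as a countable intersection of open dense sets.

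\textbf{Baire setup.} Since $\Delta_{2d}(T)$ is closed in $X\times X$ (being the preimage of the diagonal under the continuous map $(x,y)\mapsto(T^{2d}x,T^{2d}y)$), for each integer $k\geq 1$ the set
\[
K_k=\bigl\{(x,y)\in X\times X:\dist\bigl((x,y),\Delta_{2d}(T)\bigr)\geq 1/k\bigr\}
\]
is compact, and $X\times X\setminus\Delta_{2d}(T)=\bigcup_k K_k$. I would set
\[
\Omega_k=\bigl\{f\in C(X):f_0^{2d}(x)\neq f_0^{2d}(y)\text{ for all }(x,y)\in K_k\bigr\},
\]
so that $\Omega=\bigcap_k\Omega_k$. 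Openness of each $\Omega_k$ follows from the compactness of $K_k$ and the joint continuity of $(f,x,y)\mapsto\|f_0^{2d}(x)-f_0^{2d}(y)\|$: if $f\in\Omega_k$ the infimum of this function over $\{f\}\times K_k$ is positive, and the inequality $\|g_0^{2d}(x)-g_0^{2d}(y)\|\geq\|f_0^{2d}(x)-f_0^{2d}(y)\|-2\sqrt{2d+1}\,\|g-f\|_\infty$ shows the same positivity persists on a sup-norm neighborhood of $f$.

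\textbf{Density as the main step.} Given $f_0\in C(X)$ and $\delta>0$, I would follow Gutman's scheme in \cite{Gut16} and introduce a finite-dimensional subspace $V\subset C(X)$ of bump functions subordinate to a sufficiently fine open cover of $X$, and then search within the $\delta$-ball of $V$ for a perturbation $h$ such that $f_0+h\in\Omega_k$. The natural object is the evaluation map
\[
\Phi:V\times X\times X\to\R^{2d+1},\qquad\Phi(h,x,y)=(f_0+h)_0^{2d}(x)-(f_0+h)_0^{2d}(y),
\]
and the goal is to select $h$ so that $\Phi(h,\cdot,\cdot)^{-1}(0)$ misses $K_k$. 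The key is to stratify $K_k$ by the combinatorial pattern of coincidences among the orbit segments $x,Tx,\ldots,T^{2d}x$ and $y,Ty,\ldots,T^{2d}y$, treating separately the substrata cut out by membership in $\per_n(T)$ for $n=1,\ldots,2d$. On the generic stratum, where all $2(2d+1)$ orbit points are distinct, the $2d+1$ coincidence equations $f(T^ix)=f(T^iy)$ involve independent evaluations of $f$, and since $\dim(X\times X)\leq 2d<2d+1$, Theorem~\ref{teoCharTopDim} applied to $\Phi$ delivers a generic $h\in V$ making the bad set empty (the standard Menger--N\"obeling count). On the degenerate strata sitting inside $\per_n(T)\times X$ or $X\times\per_n(T)$ the orbit segment of the periodic factor has at most $n$ distinct points, so the system of $2d+1$ equations effectively collapses in that factor; the hypothesis $\dim\per_n(T)<n/2$ is precisely what keeps the dimension of such a stratum strictly below the effective codimension of the corresponding fiber of $\Phi$, so that Theorem~\ref{teoCharTopDim} still applies within the stratum. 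Local injectivity of $T$ is used to ensure that, near each orbit point, $V$ can contain bump functions that perturb the distinct coordinates of $f_0^{2d}$ independently.

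\textbf{Main obstacle.} The delicate part of the argument is the bookkeeping of cross-collisions: even for $(x,y)\notin\Delta_{2d}(T)$ one may have $T^ix=T^jy$ with $i\neq j$, which couples the coincidence equations and shrinks the apparent rank of $D\Phi$. Enumerating every combinatorial type of such collisions, verifying in each case that the periodic-point hypothesis keeps the dimension count tight, and gluing these stratified estimates into a single statement about a generic $h\in V$, is the crux of the extension of Gutman's injective result to the locally injective setting. Once this stratified perturbation argument is carried out, density of every $\Omega_k$ is established and residuality of $\Omega$ follows from Baire's theorem.
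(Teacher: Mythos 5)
Your Baire skeleton (exhausting $X\times X\setminus\Delta_{2d}(T)$ by the compact sets $K_k$ and checking openness of each $\Omega_k$) is fine, but it is not where the theorem lives: the density of each $\Omega_k$ is the whole content, and your proposal explicitly defers it --- the final paragraph names the ``bookkeeping of cross-collisions'' as the crux and leaves it unproved. Concretely, two essential pieces are missing. First, when the forward orbits merge, i.e.\ $T^{l+k}x=T^{l}y$ with $(x,y)\notin\Delta_{2d}(T)$, the issue is not only that the coincidence equations are coupled: a perturbation of the difference map $\Phi(h,\cdot,\cdot)$ cannot be prescribed freely, because it must be realized by a \emph{single} continuous function on $X$, and the same points of $X$ occur in both orbit segments. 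This realizability problem is exactly what the paper's Lemma \ref{ObsUV_grafica} solves, via the $\delta$-function and an inductive redefinition of $f$ along the merged orbit; nothing in your sketch addresses it, and it is precisely where passing from Gutman's injective setting to local injectivity has its cost. Second, on the strata involving (pre)periodic points the relevant tool is not Theorem \ref{teoCharTopDim} applied to the pair space: the observation vector of a point of $H_{m,k}$ is constrained to the subspace $S=\{x_i=x_j \text{ if } i\equiv j\ (\mathrm{mod}\ m),\ i,j\geq k\}$, so one needs a generic-disjointness statement for two \emph{separately} perturbable maps $F\in C(U,\R^{n+1})$, $G\in C(V,S)$ (Proposition \ref{haylugar}, resting on Lemma \ref{esquivaAff}), and one must verify that $\dim\per_m(T)<m/2$ yields $\dim U+\dim V\leq n$ for the correct $n$ (which in the merged subcases is $s+m-1$ or $s+m_2-1$, not $2d$). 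You assert that the hypothesis ``keeps the dimension count tight'' without carrying out any of these counts; the case analysis (the paper's Cases 1--6 with subcases A and B) is where this is actually done.

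There is also a structural problem with running the argument globally on each $K_k$ with a fixed finite-dimensional space $V$ of bumps: your strata (exact coincidence patterns) are not closed, so the compactness needed both for openness and for the unstable-value arguments is not automatic, and a single $h\in V$ that works simultaneously for all strata is not obtained by intersecting ``generic'' sets stratum by stratum --- dense good sets in different strata need not have a common element in a prescribed $\delta$-ball of $V$. The paper sidesteps both difficulties by arguing locally: around each pair $p$ it builds a \emph{compact} relative neighborhood $W$ inside one of finitely many auxiliary subspaces ($G_{k,l}$, $H_{m,k}$ and their products), proves that $\Omega(W)$ is open and dense in $C(X)$ by one of Lemmas \ref{ObsUV_disjuntosNP}, \ref{ObsUV_grafica}, \ref{ObsUV_disjuntosNPvsP}, and then extracts a countable cover via Lemma \ref{quelindelof} so that $\Omega$ is a countable intersection of open dense sets. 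Until you supply analogues of these perturbation lemmas and the accompanying dimension bookkeeping, the proposal is a plan, not a proof.
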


 When a property holds in a residual set we say that the property is \emph{generic}. Therefore, Theorem \ref{thmGutmanEndos}  means that a generic function of $C(X)$ observes a map $T$ on $X\times X\setminus \Delta_{2d}(T)$ in $2d$ steps. The proof of Theorem \ref{thmGutmanEndos} is given in \S \ref{secGutEnd}. The next result says that Theorem \ref{thmGutmanEndos} is an extension of \cite{Gut16}*{Theorem 1.1}.

\begin{thm}[Gutman's Theorem]\label{teoGutmanHomeo}
 Let $T\colon X\to X$ be an injective continuous map on a compact metric space $X$ of $\dim X\leq d$ such that $\dim \per_n(T)<n/2$ for $n=1,\ldots,2d$. Then the set 
  $$\Omega=\{f\in C(X):f\text{ observes }T\text{ in }2d\text{ steps}\}$$
 is residual.
\end{thm}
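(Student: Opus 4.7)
The plan is to derive Theorem \ref{teoGutmanHomeo} as an immediate corollary of the more general Theorem \ref{thmGutmanEndos}, which the paper has just presented explicitly as an extension of Gutman's original result. First I would verify that all hypotheses of Theorem \ref{thmGutmanEndos} transfer to the injective setting: the bound $\dim X\leq d$ and the conditions $\dim\per_n(T)<n/2$ for $n=1,\ldots,2d$ are literally the same, while global injectivity of $T$ trivially implies local injectivity, since on any ball the restriction of $T$ is still injective.

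Next I would show that under the injectivity assumption the two residual sets $\Omega$ appearing in the two theorems coincide. A short induction on $n$ shows that $T^n$ is injective for every $n\geq0$; consequently $T^nx=T^ny$ forces $x=y$, so $\Delta_n(T)=\Delta$ for every $n\in\N$. In particular $X\times X\setminus\Delta_{2d}(T)=X\times X\setminus\Delta$, and therefore the property ``$f$ observes $T$ on $X\times X\setminus\Delta_{2d}(T)$ in $2d$ steps'' is literally the same as ``$f$ observes $T$ in $2d$ steps.'' Hence the set $\Omega$ of Theorem \ref{teoGutmanHomeo} coincides with the set $\Omega$ of Theorem \ref{thmGutmanEndos}, and the residuality follows at once.

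The only genuine obstacle lies inside the proof of Theorem \ref{thmGutmanEndos} itself, which must handle collapsing points in the locally injective but non-injective case and is therefore the content-carrying result; the present statement is a transparent specialization, included so that Gutman's theorem is recovered verbatim. Were one forced to give an independent argument, the strategy would be the same one used for the more general result: exhibit $\Omega$ as a countable intersection of open dense subsets of $C(X)$, by encoding ``$(x,y)$ is not observed in $2d$ steps'' as a closed condition on the compact set $X\times X\setminus\Delta$, and then use the dimension hypothesis on $X$ together with the bounds on $\dim\per_n(T)$ and a Hurewicz--Wallman-type unstable-value argument (Theorem \ref{teoCharTopDim}) to perturb any given $f$ off the obstruction.
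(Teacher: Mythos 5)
Your proposal is correct and follows the same route as the paper: the paper likewise deduces Theorem \ref{teoGutmanHomeo} from Theorem \ref{thmGutmanEndos} by noting that injectivity of $T$ gives $\Delta_n(T)=\Delta$ for all $n$, so the two sets $\Omega$ coincide (your extra remarks on local injectivity and the closing sketch are fine but not needed).
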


\begin{proof}
 For an injective map $T$ it holds that $\Delta_n(T)=\Delta=\{(x,x):x\in X\}$ for all $n\in\N$. Then, the result follows by Theorem \ref{thmGutmanEndos}.
\end{proof}

 We proceed to explain what can happen if $T$ is not injective and to derive some consequences of Theorem \ref{thmGutmanEndos}.

\begin{df}
 We say that \emph{$f$ observes $T$} if for all $x\neq y$ there is $n\geq 0$ such that $f(T^nx)\neq f(T^ny)$.
\end{df}

 The next result shows that for a non-injective map in the hypothesis of Theorem \ref{thmGutmanEndos} it could happen that the set of functions $f\in C(X)$ that observes $T$ is not residual. Moreover, we show that the set of functions $f\in C(X,\R^d)$ that observes $T$ is not even dense, when $X$ is a compact manifold of $\dim X=d\geq1$.

\begin{prop}\label{propObsNoGen} 
 Let $X$ be a compact $d$-dimensional manifold, $d\geq1$. If $T\colon X\to X$ is a non-injective, locally injective continuous map then there exists an open set $U\subseteq C(X,\R^d)$ such that no $f\in U$ observes $T$.
\end{prop}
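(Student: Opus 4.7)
The plan is to use non-injectivity to produce a $d$-parameter family of collapsing pairs, and then show by a topological-degree argument that every function sufficiently close to a carefully chosen anchor must identify at least one such pair. Fix $x_0\neq y_0$ with $Tx_0=Ty_0$. Because $X$ is a $d$-manifold and $T$ is locally injective, Brouwer's invariance of domain promotes local injectivity to ``local homeomorphism'', so I may pick disjoint open sets $V_0\ni x_0$ and $V_0'\ni y_0$ such that $T|_{V_0}$ and $T|_{V_0'}$ are both homeomorphisms onto a common open neighborhood $W$ of $Tx_0$. The composition
\[
  \phi := (T|_{V_0'})^{-1}\circ T|_{V_0}\colon V_0\to V_0'
\]
is then a homeomorphism satisfying $T\phi(x)=Tx$ for every $x\in V_0$, so each pair $(x,\phi(x))$ with $x\in V_0$ is a collapsing pair with $x\neq \phi(x)$, and any $g\in C(X,\R^d)$ observes it if and only if $g(x)\neq g(\phi(x))$.

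Next I would construct the anchor $f_0\in C(X,\R^d)$. Choose a closed $d$-cell $B\subset V_0$ (possible since $V_0$ is an open subset of a $d$-manifold) and a homeomorphism $h\colon B\to\overline{B^d}$ onto the closed unit ball of $\R^d$. The sets $B$ and $\phi(B)$ are disjoint compacta in $X$, so Tietze's extension theorem yields some $f_0\in C(X,\R^d)$ with $f_0|_B=h$ and $f_0|_{\phi(B)}\equiv 0$. Define $F_0\colon B\to\R^d$ by $F_0(x):=f_0(x)-f_0(\phi(x))$; then $F_0=h$ on $B$, in particular $\|F_0(x)\|=1$ on $\partial B$, and $F_0|_{\partial B}\colon\partial B\to S^{d-1}$ is a homeomorphism between $(d-1)$-spheres, hence not null-homotopic in $\R^d\setminus\{0\}$ for any $d\geq 1$.

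Finally, set $U:=\{g\in C(X,\R^d):\|g-f_0\|_\infty<1/4\}$, open in the compact-open topology because $X$ is compact. For $g\in U$ put $G(x):=g(x)-g(\phi(x))$ on $B$; then $\|G-F_0\|_\infty<1/2$ on $B$, so $\|G(x)\|>1/2$ on $\partial B$ and the straight-line homotopy from $F_0|_{\partial B}$ to $G|_{\partial B}$ stays inside $\R^d\setminus\{0\}$. Hence $G|_{\partial B}$ is also not null-homotopic in $\R^d\setminus\{0\}$, which forbids a continuous nonvanishing extension of $G$ to the $d$-cell $B$; consequently some $x^*\in B$ satisfies $g(x^*)=g(\phi(x^*))$. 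Since $(x^*,\phi(x^*))$ is a pair of distinct collapsing points, $g$ fails to observe $T$, and this holds for every $g\in U$.

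The main obstacle is the stability step in the last paragraph: upgrading a single zero of the ``collapse map'' $F_0$ into a zero of $G$ for every nearby $g$ requires a genuine topological-invariance argument (null-homotopy detection in $\R^d\setminus\{0\}$), rather than just a linear perturbation estimate. The invariance-of-domain reduction that makes the pairing $\phi$ well defined is standard but essential, since without it one cannot parametrize a full $d$-dimensional family of collapsing pairs on which to run the degree argument.
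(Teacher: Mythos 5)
Your argument is correct, and its skeleton is the same as the paper's: pick a collapsing pair, use local injectivity (via invariance of domain, which the paper leaves implicit when it asserts the common image $B$) to build the pairing homeomorphism $\phi$, construct an anchor function by Tietze so that the ``collapse difference'' $x\mapsto f_0(x)-f_0(\phi(x))$ has a zero that survives small perturbations, and note that a coincidence $g(x^*)=g(\phi(x^*))$ kills observability because the forward orbits of $x^*$ and $\phi(x^*)$ agree from time $1$ on. The genuine difference is how the robustness of the zero is certified. The paper quotes Theorem \ref{teoCharTopDim}: since $\dim A_1=d$, some $g\in C(A_1,\R^d)$ has $0$ as a \emph{stable} value, and stability is exactly the persistence needed. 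You instead make the difference map an explicit homeomorphism onto the closed unit ball and prove persistence by hand, via the non-null-homotopy of $F_0|_{\partial B}$ in $\R^d\setminus\{0\}$ and the straight-line homotopy estimate; in effect you re-prove, in the one case you need, that $0$ is a stable value of a homeomorphism onto the $d$-ball (the Brouwer-type fact underlying the dimension-theoretic characterization). What this buys: your proof is self-contained modulo the no-retraction/degree fact, produces an explicit anchor and an explicit radius $1/4$ for the open set $U$, and works uniformly down to $d=1$ (where it degenerates to the intermediate value theorem); what it costs is an extra appeal to the cell structure of the manifold to choose $B\cong\overline{B^d}$, whereas the paper's version only uses $\dim A_1=d$ together with the Hurewicz--Wallman characterization. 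Both proofs are valid; yours is a concrete instantiation of the same mechanism.
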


\begin{proof}
 Take $x_1,x_2\in X$ such that $x_1\neq x_2$ and $Tx_1=Tx_2=y$. Consider two disjoint compact neighborhoods $A_i$ of $x_i$, $i=1,2$, and $B$ of $y$ such that each $T|_{A_i}\colon A_i\to B$ is a homeomorphism. Let $h\colon A_1\to A_2$ be the homeomorphism $h=T|_{A_2}^{-1}\circ T$. Since $X$ is a $d$-dimensional manifold we have $\dim A_1=d$, and then, by Theorem \ref{teoCharTopDim}, there exists a continuous function $g\colon A_1\to\R^d$ such that $0$ is a stable value of $g$. Take $f\in C(X,\R^d)$ such that $g(x)=f(x)-f\bigl(h(x)\bigr)$ for $x\in A_1$ (for example choose $f=g$ on $A_1$, $f=0$ on $A_2$ and extend using Tietze's extension theorem). Then any sufficiently small perturbation of $f$ gives rise to a small perturbation of $g$ which takes $0$ as a value. Thus the perturbations $\bar f$ of $f$ will take the same value at some $a\in A_1$ and $b=h(a)\in A_2$, and we have $Ta=Tb$ by the definition of $h$. This proves that no $\bar f$ in a neighborhood of $f$ observes $T$.
\end{proof}

 There are functions that observe but do not observe in any bounded number of steps. Let us give an example. 

\begin{ex}
 On the circle $\Suno=\{z\in\C:|z|=1\}$ consider the map $T\colon \Suno\to \Suno$ defined as $Tz=z^2$. Let $h\colon [0,2\pi]\to [0,2\pi]$ be such that $h(0)=h(\epsilon)=0$, $h(2\pi)=2\pi$ and extended linearly, and define $f\colon \Suno\to \C$ by $f(e^{\theta i})=e^{h(\theta)i}$. It is easy to see that if $\epsilon$ is small then $f$ observes $T$. Since $f$ is constant in an arc containing the fixed point $z=1$, it does not observe in any bounded number of steps.
\end{ex}

 The next corollary is important since it reduces the problem of finding an observing function $f$ for a given system to the problem of finding a function $f$ distinguishing collapsing points. For this purpose, given $n\geq 1$, we define
  $$\Delta^{*}_n(T)=\bigl\{(x,y)\in X\times X:x\neq y,\ldots, T^{n-1}x\neq T^{n-1}y\text{ and } T^nx=T^ny\bigr\}.$$
 Note that for $f\in C(X,\R^m)$ the condition that $f$ observes $T$ on $\Delta_1^*(T)$ means that $f(x)\neq f(y)$ if $x\neq y$ and $Tx=Ty$. 

\begin{cor}\label{corObsMuerto1}
 Let $T\colon X\to X$ be a locally injective continuous map on a compact metric space $X$ of $\dim X\leq d$ such that $\dim \per_n(T)<n/2$ for $n=1,\ldots,2d$. Then, if there exists $f\in C(X,\R^m)$ observing $T$ on $\Delta^*_1(T)$ then there exists a perturbation $\bar f\in C(X,\R^m)$ of $f$ observing $T$ in $2d$ steps.
\end{cor}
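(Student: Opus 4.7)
The plan is to split any pair $(x,y)\in X\times X\setminus\Delta$ into two classes and handle each by different means. If $(x,y)\notin\Delta_{2d}(T)$, a single coordinate of $\bar f$ chosen generically via Theorem \ref{thmGutmanEndos} will already separate the time series $\bar f_0^{2d}(x)$ and $\bar f_0^{2d}(y)$. If instead $(x,y)\in\Delta_{2d}(T)\setminus\Delta$, let $n\in\{1,\dots,2d\}$ be the smallest index for which $T^nx=T^ny$; then $(T^{n-1}x,T^{n-1}y)\in\Delta_1^*(T)$, so every function $\bar f$ observing $T$ on $\Delta_1^*(T)$ automatically gives $\bar f(T^{n-1}x)\neq \bar f(T^{n-1}y)$ and hence $\bar f_0^{2d}(x)\neq \bar f_0^{2d}(y)$. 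Thus the corollary reduces to producing a small perturbation $\bar f$ of $f$ that simultaneously observes $T$ on $\Delta_1^*(T)$ and on $X\times X\setminus\Delta_{2d}(T)$ in $2d$ steps.

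The main obstacle is the first of these conditions: observation on the a priori non-closed set $\Delta_1^*(T)$ must be preserved under perturbation. The critical input is that local injectivity on the compact space $X$ yields, by a standard Lebesgue-number argument, a uniform $\epsilon>0$ such that $T$ is injective on every ball $B_\epsilon(x)$. Consequently every $(x,y)\in\Delta_1^*(T)$ satisfies $\dist(x,y)\geq\epsilon$, so $\Delta_1^*(T)$ is closed and hence compact. The continuous function $(x,y)\mapsto \|f(x)-f(y)\|$ is therefore bounded below on $\Delta_1^*(T)$ by some $\delta>0$, and any $\bar f\in C(X,\R^m)$ with $\|\bar f-f\|_\infty<\delta/2$ still observes $T$ on $\Delta_1^*(T)$.

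For the second condition, Theorem \ref{thmGutmanEndos} supplies a residual set $\Omega\subseteq C(X)$ of scalar functions observing $T$ on $X\times X\setminus\Delta_{2d}(T)$ in $2d$ steps. The collection of $\bar f=(\bar f_1,\dots,\bar f_m)\in C(X,\R^m)$ with $\bar f_1\in\Omega$ is residual in $C(X,\R^m)$, since it is the preimage of $\Omega$ under the continuous projection onto the first coordinate. Any such $\bar f$ inherits the observation property on $X\times X\setminus\Delta_{2d}(T)$ from its first coordinate.

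To conclude, use density of the residual set above to pick $\bar f$ in it with $\|\bar f-f\|_\infty<\delta/2$. By construction $\bar f$ satisfies both required conditions, and the reduction of the first paragraph then shows that $\bar f$ observes $T$ in $2d$ steps on all of $X\times X\setminus\Delta$, completing the argument.
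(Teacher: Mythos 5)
Your proposal is correct and follows essentially the same route as the paper: reduce everything to simultaneously observing on $\Delta_1^*(T)$ (which handles all of $\Delta_{2d}(T)\setminus\Delta$ via the minimal collapsing time) and on $X\times X\setminus\Delta_{2d}(T)$ in $2d$ steps via Theorem \ref{thmGutmanEndos}, using compactness of $\Delta_1^*(T)$ to make the first property stable under small perturbations. You merely spell out two points the paper leaves implicit --- the Lebesgue-number argument showing $\Delta_1^*(T)$ is closed, and the passage from the scalar residual set of Theorem \ref{thmGutmanEndos} to an $\R^m$-valued perturbation by adjusting the first coordinate (note the set in question is simply $\Omega\times C(X,\R^{m-1})$, which is residual; ``preimage of a residual set under a continuous map'' alone would not suffice as a justification).
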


\begin{proof}
 Suppose that $f\in (X,\R^m)$ observes $T$ on $\Delta_1^*(T)$. Then, it is clear that $f$ observes $T$ on $\Delta_1^*(T)$ in $0$ steps and on $\Delta^{*}_n(T)$ in $n-1$ steps for all $n\geq1$. Therefore $f$ observes $T$ on $\Delta_{2d}(T)\setminus\Delta=\bigcup_{n=1}^{2d}\Delta^{*}_n(T)$ in $2d-1$ steps. Since $T$ is locally injective, we have that $\Delta^*_1(T)$ is compact and therefore any sufficiently small perturbation $\bar f\in C(X,\R^m)$ of $f$ will observe $T$ on $\Delta^*_1(T)$ and on $\Delta_{2d}(T)\setminus\Delta$ in $2d-1$ steps. Now applying Theorem \ref{thmGutmanEndos} we see that we can choose a perturbation $\bar f\in C(X,\R^m)$ of $f$ which simultaneously observes $T$ on $\Delta_{2d}(T)\setminus\Delta$ in $2d-1$ steps and on $X\times X\setminus \Delta_{2d}(T)$ in $2d$ steps. Then we conclude that $\bar f$ observes $T$ in $2d$ steps.
\end{proof}

 In the next section we give more applications of these results in the study of expansive maps.

\section{Strict observability and expansive maps}\label{secAppExpSys}

 In this section we introduce the concept of strict observability for the study of positively expansive maps. It is remarkable that positive expansivity is equivalent to strict observability. We also prove that the observability number of a positively expansive map on a torus equals 2. 

 Let $T\colon X\to X$ be a continuous map of a compact metric space $(X,\dist)$.

\begin{df}\label{dfPosExp}
 We say that $T$ is \emph{positively expansive} if there is $\delta>0$ such that if $x\neq y$ then $\dist(T^nx,T^ny)>\delta$ for some $n\geq 0$. Such $\delta$ is called an \emph{expansivity constant}.
\end{df}

 It is known that if a compact metric space admits a positively expansive map then $\dim X<\infty$. The proof is analogous to the case of expansive homeomorphisms, see \cite{Ma} (also \cites{AH,Kato93}).

\begin{df}
 We say that $f\in C(X,\R^m)$ \emph{strictly observes} the map $T$ if there exists $\epsilon>0$ such that for all $x\neq y$ there exists $n\geq 0$ such that $\|f(T^nx)-f(T^ny)\|>\epsilon$. 
 In this case we say that $\epsilon$ is a \emph{precision} of the observation.
\end{df}

\begin{prop}\label{propObsPosExp}
 Let $T\colon X\to X$ be a positively expansive map on a compact metric space $X$ of $\dim X\leq d$ and $m\geq 1$. 
 Then:
 \begin{enumerate}
  \item a generic function in $C(X,\R^m)$ observes $T$ on $X\times X\setminus \Delta_{2d}(T)$,
  \item if $f\in C(X,\R^m)$ observes $T$ then $f$ strictly observes $T$.
 \end{enumerate}
\end{prop}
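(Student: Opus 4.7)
The plan is to deduce part (1) directly from Theorem \ref{thmGutmanEndos} by verifying its three hypotheses on $T$, and to prove part (2) via a compactness argument that converts a sequence of pairs violating strict observability into a single pair $(a,b)$ whose entire forward $T$-orbit is collapsed by $f$, contradicting global observability.

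For (1), I would first check that positive expansivity implies local injectivity: if distinct points $y_1,y_2$ near a given $x$ satisfied $Ty_1=Ty_2$, then $T^ny_1=T^ny_2$ for all $n\geq 1$, and the sup of orbit distances would equal $\dist(y_1,y_2)$, which can be made smaller than any expansivity constant. Next, for the hypothesis $\dim\per_n(T)<n/2$, I would use that each $T^j$ is positively expansive with the same constant $\delta$: two fixed points of $T^j$ within $\delta$ of each other would remain within $\delta$ under every $T^j$-iterate and hence coincide, so $\text{Fix}(T^j)$ is $\delta$-separated, thus finite by compactness of $X$. Therefore $\per_n(T)=\bigcup_{j=1}^n\text{Fix}(T^j)$ is finite and $\dim\per_n(T)=0<n/2$. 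Theorem \ref{thmGutmanEndos} then yields a residual set $\Omega\subseteq C(X)$ of observers. To pass to $C(X,\R^m)\cong C(X)^m$, note that the projection $\pi_1$ onto the first coordinate is continuous and open, so $\pi_1^{-1}(\Omega)$ is residual in $C(X,\R^m)$; any such $f$ observes $T$ on $X\times X\setminus\Delta_{2d}(T)$ through its first coordinate alone.

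For (2), I argue by contradiction: suppose $f$ observes $T$ but not strictly, so for every $k\in\N$ there exist $x_k\neq y_k$ with $\sup_{n\geq 0}\|f(T^nx_k)-f(T^ny_k)\|\leq 1/k$. Let $\delta$ be an expansivity constant and let $n_k\geq 0$ be minimal with $\dist(T^{n_k}x_k,T^{n_k}y_k)>\delta$, which exists by positive expansivity. If $(n_k)$ has a bounded subsequence I may assume $n_k=N$ is constant and extract $x_k\to x$, $y_k\to y$; continuity gives $\dist(T^Nx,T^Ny)\geq\delta$ while $f(T^nx)=f(T^ny)$ for all $n\geq 0$, so observability forces $x=y$, contradicting $\dist(T^Nx,T^Ny)\geq\delta$. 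Otherwise $n_k\to\infty$; setting $a_k=T^{n_k-1}x_k$, $b_k=T^{n_k-1}y_k$ and passing to a convergent subsequence $a_k\to a$, $b_k\to b$, I obtain $\dist(Ta,Tb)\geq\delta$, whereas for every $j\geq 0$, $\|f(T^ja_k)-f(T^jb_k)\|=\|f(T^{n_k-1+j}x_k)-f(T^{n_k-1+j}y_k)\|\leq 1/k$, so $f(T^ja)=f(T^jb)$ for all $j\geq 0$; observability again forces $a=b$, contradicting $\dist(Ta,Tb)\geq\delta>0$.

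The main obstacle is the compactness-extraction in (2): one must argue in the right frame so that the limit pair $(a,b)$ is genuinely separated by some $T$-iterate (here by $T$ itself, via the minimality of $n_k$) while $f$ fails to separate its whole forward orbit. The case split on whether $n_k$ stays bounded or escapes to infinity is essentially forced, since the ``first time $T$ spreads the pair beyond $\delta$'' behaves differently in the two regimes; in each regime the conclusion follows from the interplay of three ingredients — compactness of $X$, positive expansivity providing a uniform threshold $\delta$, and the observation hypothesis turning orbit-wise collapse of $f$ into equality of points.
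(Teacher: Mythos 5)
Your overall plan matches the paper's: part (1) is deduced from Theorem \ref{thmGutmanEndos} by checking local injectivity and finiteness of $\per_n(T)$, and part (2) is the same compactness/contradiction argument. (The paper's version of (2) is slightly leaner: it passes directly to limits $T^{m_n}x_n\to x$, $T^{m_n}y_n\to y$ along the expansive times, so the limit pair is already $\delta$-separated at time $0$ while its whole forward orbit is collapsed by $f$; your minimality of $n_k$ and the bounded/unbounded dichotomy are not needed, though your two cases are each correct. Your passage from $C(X)$ to $C(X,\R^m)$ via the open projection $\pi_1$ is fine and addresses a point the paper leaves implicit.)

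The one step that fails as written is your justification that $\dim\per_n(T)<n/2$. You claim that each $T^j$ is positively expansive \emph{with the same constant} $\delta$, and that consequently $\mathrm{Fix}(T^j)$ is $\delta$-separated; neither assertion holds in general. For the one-sided $2$-shift with $\dist(x,y)=2^{-\min\{i:\,x_i\neq y_i\}}$ one may take $\delta=0.9$ as an expansivity constant of $\sigma$, yet the pair of points differing only in coordinate $1$ is never separated by more than $1/2$ under iterates of $\sigma^2$, and the fixed points $0101\ldots$ and $0000\ldots$ of $\sigma^2$ are distinct but at distance $1/2<\delta$. The conclusion you want is still true and the repair is short: if $x\neq y$ are fixed by $T^j$, expansivity gives some $n$ with $\dist(T^nx,T^ny)>\delta$, and writing $n=qj+r$ with $0\le r<j$ yields $\dist(T^rx,T^ry)>\delta$; hence $\mathrm{Fix}(T^j)$ is $\delta$-separated with respect to the compatible metric $\max_{0\le r<j}\dist(T^r\cdot,T^r\cdot)$ and is therefore finite by compactness (equivalently, $T^j$ is positively expansive for a possibly smaller constant obtained from uniform continuity of $T,\ldots,T^{j-1}$). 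With that correction $\per_n(T)$ is finite, so $\dim\per_n(T)=0<n/2$, and the rest of your argument goes through.
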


\begin{proof}
 The first part follows by Theorem \ref{thmGutmanEndos} since for every positively expansive map it is easy to see that $T$ is locally injective and $\per_n(T)$ is a finite set for all $n\geq 1$.

 To prove the second part, we argue by contradiction. Suppose that $f\in C(X,\R^m)$ observes $T$ but  for each $n\geq 1$ there are $x_n,y_n\in X$ such that $x_n\neq y_n$ and $\|f(T^kx_n)-f(T^ky_n)\|<1/n$ for all $k\geq 0$. Let $\delta>0$ be an expansivity constant of $T$ and take $m_n\geq 0$ such that $\dist(T^{m_n}x_n,T^{m_n}y_n)>\delta$ for all $n\geq 1$. Since $X$ is compact (taking a subsequence) we can assume that $T^{m_n}x_n\to x$ and $T^{m_n}y_n\to y$. Then $\dist(x,y)\geq\delta$ and $x\neq y$. But $f(T^kx)=f(T^ky)$ for all $k\geq 0$. Since $f$ observes $T$ we have a contradiction and the proof ends.
\end{proof}

 For future reference we state the following classical result.

\begin{thm}[\cite{HW}*{Theorem V 2, p. 56}]\label{teoHWEncaje}
 If $X$ is a compact metric space with $\dim X\leq d$ then a generic function $f\in C(X,\R^{2d+1})$ is an embedding.
\end{thm}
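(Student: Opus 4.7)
The plan is to prove the theorem by Baire category, following the classical Menger--N\"obeling approach. Since $X$ is compact and $\R^{2d+1}$ is Hausdorff, any continuous injection $f\colon X\to\R^{2d+1}$ is automatically closed, hence a homeomorphism onto its image; so it suffices to show that the set of injective maps is residual in $C(X,\R^{2d+1})$.

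For each $\epsilon>0$, let $D_\epsilon=\{(x,y)\in X\times X:\dist(x,y)\geq\epsilon\}$ (compact) and
\[
V_\epsilon=\{f\in C(X,\R^{2d+1}):f(x)\neq f(y)\text{ for all }(x,y)\in D_\epsilon\}.
\]
The set of injective functions equals $\bigcap_{n\geq 1}V_{1/n}$, so it suffices to prove that each $V_\epsilon$ is open and dense. Openness follows from compactness of $D_\epsilon$: if $f\in V_\epsilon$ then $\eta:=\min_{(x,y)\in D_\epsilon}\|f(x)-f(y)\|>0$, and any $g$ with $\|f-g\|_\infty<\eta/3$ lies in $V_\epsilon$.

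The real work is density. Given $f$ and $\delta>0$, I would use the covering-dimension characterization of $\dim X\leq d$ (equivalent to the definition used here; see Remark \ref{obsDefTopDim}): every finite open cover of $X$ admits a refinement of multiplicity at most $d+1$. Choose such a cover $\{U_1,\ldots,U_k\}$ with each $\diam(U_i)<\epsilon$ and with the oscillation of $f$ on each $U_i$ less than $\delta/2$. Let $\{\varphi_i\}$ be a continuous partition of unity subordinate to this cover, pick $x_i\in U_i$, and then choose points $p_1,\ldots,p_k\in\R^{2d+1}$ in \emph{general position} --- every subcollection of at most $2d+2$ of them being affinely independent --- with $\|p_i-f(x_i)\|<\delta/2$; such a choice exists since the configurations that fail general position form a meager (indeed null) subset of $(\R^{2d+1})^k$. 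Define $g(x)=\sum_{i=1}^k\varphi_i(x)\,p_i$.

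By construction $\|f-g\|_\infty<\delta$. For injectivity of $g$ on $D_\epsilon$, suppose $g(x)=g(y)$ for some $(x,y)\in D_\epsilon$. Since $\diam(U_i)<\epsilon$, the supports $I_x=\{i:\varphi_i(x)>0\}$ and $I_y=\{i:\varphi_i(y)>0\}$ must be disjoint, and each has cardinality at most $d+1$ by the multiplicity bound. The equation $g(x)-g(y)=0$ rewrites as
\[
\sum_{i\in I_x}\varphi_i(x)\,p_i\;-\;\sum_{j\in I_y}\varphi_j(y)\,p_j\;=\;0,
\]
an affine dependence among at most $2d+2$ of the $p_i$'s whose coefficients sum to $1-1=0$. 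General position forces every coefficient to vanish, contradicting $\sum_i\varphi_i(x)=1$. The main technical obstacle is arranging the general-position condition and the approximation $p_i\approx f(x_i)$ simultaneously, which is precisely where the dimension $2d+1$ of the target enters: it is the smallest $N$ for which the proper algebraic varieties cutting out affine dependences of $\leq 2d+2$ points in $\R^N$ make the bad set meager in $(\R^N)^k$, allowing a valid choice near any prescribed configuration.
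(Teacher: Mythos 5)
Your argument is correct: the openness/denseness of the sets $V_\epsilon$, the nerve-type map $g=\sum_i\varphi_i\,p_i$ built from a multiplicity-$(d+1)$ cover of small mesh, and the general-position choice of the points $p_i$ in $\R^{2d+1}$ together give exactly the classical Menger--N\"obeling embedding argument, and the final step (a continuous injection of a compact space into a Hausdorff space is an embedding) is sound. The paper does not prove this statement but simply cites Hurewicz--Wallman (Theorem V.2), whose proof is essentially the one you reconstructed, so there is nothing to correct.
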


\begin{prop}\label{propStObsPosExp}
 For a continuous map $T\colon X\to X$ of a compact metric space of $\dim X\leq d$, the following statements are equivalent:
\begin{enumerate}
 \item $T$ is positively expansive,
 \item a generic function $f\in C(X,\R^{2d+1})$ strictly observes $T$,
 \item there is $f\in C(X,\R^m)$ that strictly observes $T$, for some $m\geq 1$.
\end{enumerate}
\end{prop}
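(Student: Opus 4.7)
The plan is to establish the cycle $(1) \Rightarrow (2) \Rightarrow (3) \Rightarrow (1)$. The middle implication $(2) \Rightarrow (3)$ is immediate since any residual subset of $C(X,\R^{2d+1})$ is nonempty. For $(1) \Rightarrow (2)$ I would first apply Theorem \ref{teoHWEncaje} to produce a residual set of embeddings in $C(X,\R^{2d+1})$, and then argue that every such embedding $f$ strictly observes $T$. Fix an expansivity constant $\delta>0$. The set $K=\{(a,b)\in X\times X:\dist(a,b)\geq\delta\}$ is compact, and the continuous function $(a,b)\mapsto\|f(a)-f(b)\|$ is strictly positive on $K$ by injectivity of $f$, so it attains a positive minimum $\epsilon_0>0$. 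Given $x\neq y$, positive expansivity supplies an $n\geq 0$ with $\dist(T^nx,T^ny)>\delta$, hence $(T^nx,T^ny)\in K$ and $\|f(T^nx)-f(T^ny)\|\geq\epsilon_0$. Thus $f$ strictly observes $T$ with precision $\epsilon_0/2$.

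For $(3) \Rightarrow (1)$, suppose $f\in C(X,\R^m)$ strictly observes $T$ with precision $\epsilon>0$. By uniform continuity of $f$ on the compact space $X$, there exists $\delta>0$ such that $\dist(a,b)\leq\delta$ implies $\|f(a)-f(b)\|\leq\epsilon$; contrapositively, $\|f(a)-f(b)\|>\epsilon$ implies $\dist(a,b)>\delta$. Hence for any $x\neq y$, the index $n$ produced by strict observation satisfies $\dist(T^nx,T^ny)>\delta$, proving that $T$ is positively expansive with constant $\delta$. Each implication is short; the main conceptual point is the realization that Theorem \ref{teoHWEncaje} does the heavy lifting in $(1)\Rightarrow(2)$, after which the compactness argument that transfers metric separation in $X$ to separation in $\R^{2d+1}$ via injectivity of an embedding is the natural mechanism. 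No serious obstacle appears to arise, as the uniform continuity of $f$ (in one direction) and of $f^{-1}$ on $f(X)$ (in the other, implicit in the embedding argument) are the only analytic tools needed.
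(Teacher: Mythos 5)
Your proof is correct, and its skeleton coincides with the paper's: $(2)\Rightarrow(3)$ is trivial, $(3)\Rightarrow(1)$ is exactly the paper's uniform-continuity argument, and $(1)\Rightarrow(2)$ rests on Theorem \ref{teoHWEncaje}. The one place you diverge is the step ``embedding $\Rightarrow$ strictly observes'': the paper gets this by citing Proposition \ref{propObsPosExp}(2) (an injective $f$ trivially observes $T$, and for positively expansive $T$ observability implies strict observability, proved there by a subsequence/contradiction argument), whereas you argue directly, taking the positive minimum of $(a,b)\mapsto\|f(a)-f(b)\|$ on the compact set $K=\{\dist(a,b)\geq\delta\}$ and pushing the expansivity constant through it. The paper's route is shorter given that Proposition \ref{propObsPosExp} is already available; yours is self-contained, yields an explicit precision $\epsilon_0$ depending only on $f$ and $\delta$, and in fact proves the slightly stronger statement that \emph{every} injective continuous $f$ strictly observes a positively expansive $T$, without passing through the qualitative ``observes $\Rightarrow$ strictly observes'' step. (The degenerate case $K=\varnothing$ forces $X$ to be a single point, where strict observability is vacuous, so nothing is lost there.)
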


\begin{proof}
 \mbox{($1\to2$)} From Theorem \ref{teoHWEncaje} we know that a generic map $f\in C(X,\R^{2d+1})$ is injective. In particular, such $f$ observes $T$ and by Proposition \ref{propObsPosExp}, $f$ strictly observes $T$.
 \mbox{($2\to3$)} It is obvious.
 \mbox{($3\to1$)} Suppose that $f\in C(X,\R^m)$ strictly observes $T$ with precision $\epsilon$. Since $X$ is compact and $f$ is continuous there is $\delta>0$ such that if $\dist(x,y)<\delta$ then $\|f(x)-f(y)\|<\epsilon$. Then, $\delta$ is an expansivity constant.
\end{proof}

\begin{df}
 The \emph{observability number} of $T$ is the minimal $m\geq 1$ for which there is $f\in C(X,\R^m)$ that observes $T$. This number will be denoted as $\Obs(T)$. 
\end{df}

\begin{rmk}
 A general bound of the observability number is 
  $$\Obs(T)\leq2\dim X+1.$$
 This bound follows by Theorem \ref{teoHWEncaje}. In particular, if $\dim X=0$ then $\Obs(T)=1$ for every map $T\colon X\to X$. 
\end{rmk}

 We will calculate the observability number of some classes of dynamical systems. We start with the case of homeomorphisms. Recall that a \emph{continuum} is a compact connected set.

\begin{df}
 A homeomorphism $T\colon X\to X$ is \emph{expansive} if there is $\delta>0$ such that if $x\neq y$ then $\dist(T^nx,T^ny)>\delta$ for some $n\in\Z$. We say that the homeomorphism $T$ is \emph{cw-expansive} if there is $\delta>0$ such that if $C\subset X$ is a non trivial continuum then $\diam(T^nC)>\delta$ for some $n\in\Z$.
\end{df}

 It is clear that every cw-expansive homeomorphism is an expansive homeomorphism. The concept of expansivity is well known in the context of hyperbolic diffeomorphisms, as for example basic sets of Smale's Axiom A diffeomorphisms and Anosov diffeomorphisms. The idea of cw-expansivity was introduced by Kato \cite{Kato93}. His deep understanding of Mañé's arguments in \cite{Ma} allowed him to extend, with simpler proofs, some results for expansive homeomorphisms to cw-expansivity. For our purposes, it is remarkable that if a compact metric space admits a cw-expansive homeomorphism then the space has finite topological dimension \cite{Kato93}*{Theorem 5.2}.

\begin{prop}\label{propCwObs1}
 If $T$ is a cw-expansive homeomorphism of a compact metric space $X$ then $\Obs(T)=1$. Moreover, a generic function $f\in C(X)$ observes $T$.
\end{prop}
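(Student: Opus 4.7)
The plan is to reduce the proposition to Gutman's Theorem (Theorem~\ref{teoGutmanHomeo}). A cw-expansive homeomorphism $T$ is injective, and by the Kato result cited immediately before the statement (\cite{Kato93}*{Theorem 5.2}) the space $X$ has finite topological dimension, say $d=\dim X$. So the only hypothesis of Gutman's Theorem requiring work is $\dim \per_n(T) < n/2$ for $n=1,\ldots,2d$. I would in fact prove the cleaner assertion that $\dim \per_n(T)=0$ for every $n\geq 1$. Granted this, Theorem~\ref{teoGutmanHomeo} applied with $m=1$ produces a residual set of $f\in C(X)$ that observe $T$ in $2d$ steps; in particular a generic $f\in C(X)$ observes $T$, which proves the ``moreover'' clause, and the existence of any such $f$ immediately yields $\Obs(T)=1$.

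The only real work is therefore the claim $\dim\per_n(T)=0$. Writing $\per_n(T)=\bigcup_{j=1}^{n}\mathrm{Fix}(T^j)$ as a finite union of closed sets and applying the finite sum theorem for topological dimension, it suffices to show that each $\mathrm{Fix}(T^j)$ is totally disconnected. Suppose for contradiction that $C\subset \mathrm{Fix}(T^j)$ is a non-trivial continuum. Let $\delta>0$ be a cw-expansivity constant of $T$, and choose $\eta>0$ small enough that, by uniform continuity of $T,T^2,\ldots,T^{j-1}$, every subset $A\subset X$ with $\diam(A)<\eta$ satisfies $\diam(T^kA)<\delta$ for $k=0,1,\ldots,j-1$.

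By the classical continuum-theoretic fact (a consequence of the Boundary Bumping Theorem) that every non-trivial continuum contains non-trivial sub-continua of arbitrarily small diameter, pick a sub-continuum $C'\subset C$ with $0<\diam(C')<\eta$. Since $C'\subset \mathrm{Fix}(T^j)$ gives $T^jC'=C'$, each iterate $T^kC'$ coincides with $T^rC'$ for some $r\in\{0,\ldots,j-1\}$, and therefore $\diam(T^kC')<\delta$ for every $k\in\Z$. This contradicts cw-expansivity of $T$ applied to the non-trivial continuum $C'$, so $\mathrm{Fix}(T^j)$ is totally disconnected as claimed. The main obstacle in the whole argument is precisely this $0$-dimensionality of periodic point sets; once it is in hand, everything reduces to a direct invocation of Gutman's Theorem.
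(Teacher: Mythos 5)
Your proof is correct and follows essentially the same route as the paper: finite dimensionality of $X$ via Kato, zero-dimensionality of $\per_n(T)$ for all $n$, and then a direct application of Gutman's Theorem. The only difference is that the paper dismisses the zero-dimensionality of the periodic point sets as ``easy to see,'' while you supply a valid justification (small subcontinua via boundary bumping, plus the finite sum theorem), so this is a faithful, fleshed-out version of the paper's argument.
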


\begin{proof}
 It is known \cite{Kato93} that if a compact metric space admits a 
 cw-expansive homeomorphism then $\dim X<\infty$. 
 It is easy to see that for a cw-expansive homeomorphism it holds that 
 $\dim \per_k(T)=0$ for all $k\geq 1$.
 Then, the result follows by Gutman's Theorem.
\end{proof}

Now we give a general estimate that 
generalizes the example of $z\mapsto z^2$ in the unit complex circle given in the introduction.

\begin{lem}\label{lemaGrTop}
 Let $G$ be a compact connected topological group and consider a continuous endomorphism $T\colon G\to G$. 
 If $T$ is non-injective then $\Obs(T)\geq 2$.  
\end{lem}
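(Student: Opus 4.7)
The plan is to prove $\Obs(T) \geq 2$ by showing that no single continuous function $f \colon G \to \R$ observes $T$. Since $T$ is a non-injective homomorphism, its kernel $K = \ker T$ is a closed subgroup strictly larger than $\{e\}$. For any $k \in K$ and $x \in G$ one has
\[
T(xk) \;=\; T(x)\,T(k) \;=\; T(x),
\]
so $T^n(xk) = T^n(x)$ for every $n \geq 1$. Hence every pair of the form $(x,xk)$ with $k \in K \setminus \{e\}$ is a collapsing pair, and any observing function must satisfy $f(x) \neq f(xk)$ on each such pair. The task reduces to producing, for an arbitrary $f$, one $x_0 \in G$ and one $k \in K \setminus \{e\}$ with $f(x_0) = f(x_0 k)$.

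Fix $k \in K \setminus \{e\}$ and an arbitrary $f \in C(G)$, and define the continuous function $\psi \colon G \to \R$ by $\psi(x) = f(xk) - f(x)$. I would then invoke the normalized Haar measure $\mu$ on the compact group $G$: translation invariance gives
\[
\int_G \psi \, d\mu \;=\; \int_G f(xk)\, d\mu(x) - \int_G f(x)\, d\mu(x) \;=\; 0.
\]
Because $G$ is connected, $\psi(G)$ is a connected subset of $\R$, i.e., an interval; an interval whose mean against a probability measure is $0$ must contain $0$. Any $x_0$ with $\psi(x_0) = 0$ provides the desired collapsing pair $(x_0, x_0 k)$ on which $f$ fails to observe $T$. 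Since $f$ was arbitrary, $\Obs(T) \geq 2$.

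The main obstacle is the one step of showing that $\psi$ actually has a zero. The Haar-measure argument above is short but imports some external machinery, so a backup route that uses only topological group axioms is worth having: assume for contradiction that $\psi$ never vanishes, so by connectedness $\psi$ has constant sign, say $\psi > 0$. Then the sequence $f(x), f(xk), f(xk^2), \dots$ is strictly decreasing and bounded, so it converges to some limit $L$. Compactness of $G$ provides a subsequence $xk^{n_j} \to y$; continuity of $f$ gives $f(y) = L$, while continuity of multiplication gives $xk^{n_j+1} \to yk$ and hence $f(yk) = L$ as well. Thus $\psi(y) = 0$, contradicting $\psi > 0$. Either route crucially uses both the compactness and the connectedness of $G$; the statement would fail without either hypothesis, as disconnected examples like a finite group already show.
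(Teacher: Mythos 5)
Your main argument is correct and is essentially the paper's own proof: pick $k\in\ker T\setminus\{e\}$, use invariance of Haar measure to see that $x\mapsto f(xk)-f(x)$ has zero mean, and use connectedness to produce a zero, giving a collapsing pair that $f$ cannot separate. Your Haar-free backup via monotone orbits is a nice elementary alternative (note the sequence $f(xk^n)$ is increasing, not decreasing, when $\psi>0$), but it is not needed.
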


\begin{proof}
 Let $f\in C(G)$. Since $T$ is non-injective there is $x\neq e$, where $e$ denotes the identity of $G$, such that $Tx=e$. Since $\int_Gf(y)-f(xy)\,d\mu(y)=0$, where $d\mu(y)$ denotes the right invariant Haar measure of $G$, we have $f(y)-f(xy)=0$ for some $y\in G$ by the continuity and connectedness assumptions. Then $y\neq xy$, $f(y)=f(xy)$ and $Tx=Txy$. 
\end{proof}

Let $\T^d=\R^d/\Z^d$ be the $d$-dimensional torus. 
Given an invertible matrix $A\in M_d(\Z)$ we say that the endomorphism $T\colon \T^d\to \T^d$ given by $Tx=Ax$ is \emph{hyperbolic} if $A$ has no eigenvalues of modulus one. The class of hyperbolic toral endomorphisms includes:
 \begin{enumerate}
  \item \emph{Anosov diffeomorphisms}, when $\left|\det(A)\right|=1$,
  \item \emph{Expanding endomorphisms}, if all the eigenvalues of $A$ have modulus greater than one,
  \item \emph{Anosov endomorphisms}, if it is not invertible and $A$ presents expanding and contracting eigenvalues.
 \end{enumerate}
The following matrices represents each of these classes in the two-dimensional torus:
 \[
  \left(
   \begin{array}{ll}
    2 & 1 \\ 1 & 1
   \end{array}
   \right),\,\,
     \left(
   \begin{array}{ll}
    2 & 0 \\ 0 & 2
   \end{array}
   \right),\,\,
  \left(
   \begin{array}{ll}
    3 & 1 \\ 1 & 1
   \end{array}
   \right)
  \]
  respectively.
 See \cite{AH} for more on this topic.

\begin{thm}\label{teoObsLinTn}
If $T$ is a hyperbolic toral endomorphism then: 
 \begin{enumerate}
  \item $\Obs(T)=1$ if $T$ is invertible (Anosov diffeomorphism) and 
  \item $\Obs(T)=2$ if $T$ is non-invertible (expanding or Anosov endomorphism).
 \end{enumerate}
\end{thm}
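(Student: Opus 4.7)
The plan has two parts. For (1), an Anosov diffeomorphism of $\T^d$ is expansive (a classical fact for hyperbolic dynamics), and any expansive homeomorphism $T$ of a compact metric space is cw-expansive: given a non-trivial continuum $C$, I pick distinct $x \neq y$ in $C$ and apply expansivity to obtain $n$ with $\dist(T^nx,T^ny) > \delta$; then $T^nx,\,T^ny \in T^nC$ forces $\diam(T^nC) > \delta$. Proposition \ref{propCwObs1} therefore gives $\Obs(T) = 1$.

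For (2), the lower bound $\Obs(T) \ge 2$ is immediate from Lemma \ref{lemaGrTop}, since $T$ is a non-injective continuous endomorphism of the compact connected topological group $\T^d$. For the upper bound my plan is to produce $f \in C(\T^d, \R^2)$ observing $T$ on $\Delta^{*}_1(T)$ and then invoke Corollary \ref{corObsMuerto1} to perturb $f$ into a function observing $T$ in $2d$ steps. The hypotheses of the corollary hold: $T(x) = Ax$ is a local diffeomorphism, and hyperbolicity makes $A^n - I$ invertible, so $\per_n(T)$ is finite and $\dim \per_n(T) = 0 < n/2$ for every $n$.

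To construct $f$, let $K = \ker T \subset \T^d$ be the (finite, non-trivial) kernel of $T$. Identifying $\R^2$ with $\C$, the candidate I propose is
\[ f(x) \,=\, \sum_{j=1}^d c_j \, e^{2\pi i x_j}, \qquad c_j = r^j, \]
with $(x_1, \ldots, x_d)$ the standard coordinates on $\T^d = \R^d / \Z^d$ and $r \in (0,1)$ small. For any $k = (k_1, \ldots, k_d) \in K \setminus \{0\}$, some $k_j$ is nonzero in $\R/\Z$; letting $j^*$ be the smallest such index, the terms with $j < j^*$ vanish in the telescoped difference and a triangle-inequality estimate yields
\[ |f(x+k) - f(x)| \,\ge\, c_{j^*} \bigl|e^{2\pi i k_{j^*}} - 1\bigr| \,-\, 2 \sum_{j > j^*} c_j. \]
Let $\delta > 0$ be the minimum of $|e^{2\pi i k_{j^*(k)}} - 1|$ over the finite set $K \setminus \{0\}$; I then choose $r$ small enough that $\delta > 2r/(1-r)$, which makes the right-hand side strictly positive for every $x$ and every $k \in K \setminus \{0\}$. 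Hence $f$ observes $T$ on $\Delta^{*}_1(T)$, and Corollary \ref{corObsMuerto1} supplies a perturbation observing $T$ in $2d$ steps, giving $\Obs(T) \le 2$ and thus $\Obs(T) = 2$.

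The main obstacle is precisely this explicit construction of a fiber-injective $f$ into $\R^2$. A naive dimension count fails, since $\Delta^{*}_1(T)$ has dimension $d$ while the diagonal of $\R^2 \times \R^2$ has codimension $2$, so a generic $f$ does not avoid intersections when $d \ge 2$. The geometric decay of the coefficients $c_j$ circumvents this by letting a single dominant Fourier mode separate the pair $(x, x+k)$ while the remaining modes contribute only a controllable perturbation.
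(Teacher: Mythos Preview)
Your proof is correct and follows essentially the same route as the paper: part (1) via expansivity and Proposition~\ref{propCwObs1}, and part (2) via Lemma~\ref{lemaGrTop} for the lower bound plus an explicit fiber-separating map $f\colon\T^d\to\C$ fed into Corollary~\ref{corObsMuerto1}. Your construction of $f$ is the mirror image of the paper's---geometrically \emph{decreasing} weights $r^j$ with the \emph{first} nonzero coordinate of the kernel element dominating, versus the paper's increasing weights $\rho_i$ with the \emph{last} nonzero coordinate dominating---but the underlying triangle-inequality estimate is identical.
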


\begin{proof} 
 As we said, if $T$ is invertible then it is an Anosov diffeomorphism. In particular, it is expansive and Proposition \ref{propCwObs1} implies that $\Obs(T)=1$.

 Assume that $T$ is non-invertible. Then by Lemma \ref{lemaGrTop} $\Obs(T)\geq2$. To obtain the reverse inequality we will show that $T$ satisfies the hypothesis of Corollary \ref{corObsMuerto1} for a certain function $f\colon\T^d\to\R^2$. Firstly, note that as $A$ is invertible, $T$ is locally injective. Given that $T$ is hyperbolic, 1 is not an eigenvalue of $A^k$ and therefore $\per_k(T)$ is finite (and zero-dimensional) for all $k\geq 1$. 
 
 Finally, we now define a function $f\colon\T^d\to\C$ observing $T$ on $\Delta_1^*(T)$. To this end view $\T^d=\Suno\times\cdots\times\Suno$ as a multiplicative group, where we consider $\Suno\subset\C$. 
 Note that as $T$ is locally injective, the set $D=\ker T$ is finite. Let $m_1,\ldots,m_d>0$ be such that if $a=(a_1,\ldots,a_d)\in D\setminus\{(1,\ldots,1)\}$ and $a_i$ is the last coordinate not equal to $1$ then $|1-a_i|>m_i$. Let $\rho_1,\ldots,\rho_d>0$ defined inductively as
 \begin{equation*}
  \left\{\begin{array}{l}
         \rho_1=1,\\
         \rho_im_i=1+2\sum_{k=1}^{i-1}\rho_k,\quad i=2,\ldots,d.
         \end{array}\right.
 \end{equation*}
 Note that by definition we have:\quad$2\sum_{k=1}^{i-1}\rho_k<\rho_im_i$\quad for $i=1,\ldots,d$.
 
 Define $f\colon\T^d\to\C$ as 
  $$f(x_1,\ldots,x_d)=\sum_{k=1}^n\rho_kx_k\qquad\text{for}\quad(x_1,\ldots,x_d)\in\T^d.$$
 To see that $f$ observes $T$ on $\Delta_1^*(T)$, consider $(x,y)\in\Delta_1^*(T)$, that is $x=(x_1,\ldots,x_d)$ and $y=(y_1,\ldots,y_d)$ such that $x\neq y$ and $Tx=Ty$. Note that the element $a=xy^{-1}$ satisfies $a\in D$ and $a\neq(1,\ldots,1)$. Let $a_i$ be the last coordinate of $a$ not equal to $1$. By definition of $m_1,\ldots,m_d$ we have $|1-a_i|>m_i$. Now we estimate
 \begin{equation*}
  \begin{split}
   |f(y)-f(x)|&=\Bigl|\sum_{k=1}^n\rho_ky_k-\sum_{k=1}^n\rho_kx_k\Bigr|
     =\Bigl|\sum_{k=1}^n\rho_k(y_k-x_k)\Bigr|
     =\Bigl|\sum_{k=1}^i\rho_k(y_k-x_k)\Bigr|\\
     &\geq\rho_i|y_i-x_i|-\sum_{k=1}^{i-1}\rho_k|y_k-x_k|
     \geq\rho_i|1-a_i||y_i|-2\sum_{k=1}^{i-1}\rho_k\\
     &>\rho_im_i-2\sum_{k=1}^{i-1}\rho_k>0.
  \end{split}
 \end{equation*}
 Therefore $f(x)\neq f(y)$, and then $f$ observes $T$ on $\Delta_1^*(T)$. As we had verified all the hypothesis of Corollary \ref{corObsMuerto1} we conclude that a perturbation of the given $f$ observes $T$, thus $Obs(T)\leq2$.\end{proof}
 
 \begin{rmk}
 In the proof of the second assertion of the previous lemma, we only use that $1$ is not an eigenvalue of $A^k$ for $k=1,\ldots,2d$. Then, it can be generalized for such invertible matrices. For example 
 \[A=\left(
  \begin{array}{rr}
    0 & 1\\ -1 & 1
  \end{array}
  \right).\]
\end{rmk}

\begin{cor}\label{con2alcanza}
If $T\colon \T^d\to\T^d$ is a positively expansive map then $\Obs(T)=2$.
\end{cor}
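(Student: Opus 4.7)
The plan is to combine Theorem \ref{teoObsLinTn} with the classification result for positively expansive maps on tori and with the invariance of $\Obs$ under topological conjugacy. First I would recall from the introduction that, by the theorems of Shub and Hiraide (cited as \cite{Hi88, Shub69}), any positively expansive map $T\colon\T^d\to\T^d$ is topologically conjugate to an expanding linear toral endomorphism $T_0\colon\T^d\to\T^d$, i.e.\ $T_0 x = Ax$ with $A\in M_d(\Z)$ invertible (as a linear map) and all eigenvalues of modulus greater than one. In particular $T_0$ is a non-invertible hyperbolic toral endomorphism, so Theorem \ref{teoObsLinTn}(2) gives $\Obs(T_0)=2$.

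Second, I would verify that $\Obs$ is a conjugacy invariant. If $h\colon \T^d\to\T^d$ is a homeomorphism with $h\circ T = T_0\circ h$, then for any $f\in C(\T^d,\R^m)$ the composition $g = f\circ h$ satisfies $g(T^n x) = f(T_0^n h(x))$ for every $n\geq0$ and $x\in \T^d$. Hence $f$ observes $T_0$ if and only if $g$ observes $T$, and the same equivalence holds for $\Obs$-minimizing $m$. This yields $\Obs(T)=\Obs(T_0)=2$.

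The two steps above are both essentially bookkeeping; the only substantive content that is being invoked is the Shub--Hiraide classification and Theorem \ref{teoObsLinTn}. The main obstacle is thus not really an obstacle but a citation: one needs the classification precisely in the form ``conjugate to a linear expanding endomorphism of $\T^d$'', which is the torus case of the infra-nilmanifold statement already recalled in the introduction. Once this is in place, the proof reduces to writing ``apply Theorem \ref{teoObsLinTn}(2) to the conjugate linear model''.
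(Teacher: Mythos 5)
Your proposal is correct and follows exactly the paper's own argument: the published proof simply invokes \cite{Hi88} and \cite{Shub69} to conjugate $T$ to an expanding endomorphism and then applies Theorem \ref{teoObsLinTn}. Your explicit verification that $\Obs$ is invariant under topological conjugacy is a detail the paper leaves implicit, but it is the same route.
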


\begin{proof}
From \cites{Hi88,Shub69} we know that $T$ is conjugate to an expanding endomorphism. 
Then, the result follows by Theorem \ref{teoObsLinTn}.
\end{proof}

\begin{que}
 Is it true that $\Obs(T)=2$ for every positively expansive map of a compact metric space $X$ with $\dim X\geq 1$?
\end{que}

 To end this section we make some remarks about the observability of expansive homeomorphisms and positively cw-expansive maps. The proofs are analogous to our previous arguments and are therefore omitted.

\subsubsection*{Observability of expansive homeomorphisms}
 Let $T\colon X\to X$ be an expansive homeomorphism of the compact metric space $(X,\dist)$. If $f\in C(X,\R^m)$ strictly observes $T$ then $T$ is positively expansive, recall Proposition \ref{propStObsPosExp}. Applying \cite{CK} we conclude that $X$ is a finite set. Then, it is natural to introduce the following definition. We say that $f\in C(X,\R^m)$ \emph{strictly bilaterally observes $T$} if there is $\epsilon>0$ such that if $x\neq y$ then $\|f(T^nx)-f(T^ny)\|>\epsilon$ for some $n\in\Z$. 
 Analogous to Propositions \ref{propObsPosExp}, \ref{propStObsPosExp} and \ref{propCwObs1} we have: 

\begin{prop}
 Let $T\colon X\to X$ be an expansive homeomorphism of a compact metric space $X$. Then, if $f\in C(X,\R^m)$ observes $T$ then $f$ strictly bilaterally observes $T$.
\end{prop}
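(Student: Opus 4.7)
The plan is to adapt, essentially verbatim, the contradiction argument used for the positively expansive case in the second part of Proposition \ref{propObsPosExp}. The only structural change is that the shifts $m_n$ witnessing expansivity can now be negative, so we need the continuity of $T^k$ for all $k\in\Z$, which is automatic since $T$ is a homeomorphism.

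Specifically, assume $f\in C(X,\R^m)$ observes $T$ but does not strictly bilaterally observe $T$. Then for each $n\geq 1$ one can pick $x_n\neq y_n$ in $X$ such that $\|f(T^kx_n)-f(T^ky_n)\|<1/n$ for every $k\in\Z$. Let $\delta>0$ be an expansivity constant of $T$; by the (bilateral) expansivity assumption there exists $m_n\in\Z$ with $\dist(T^{m_n}x_n,T^{m_n}y_n)>\delta$. Because $X$ is compact, we may extract a subsequence (which I still index by $n$) so that $T^{m_n}x_n\to x$ and $T^{m_n}y_n\to y$; passing to the limit we get $\dist(x,y)\geq\delta$, hence $x\neq y$.

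Fix any $k\in\Z$. The map $T^k$ is continuous on $X$ (here we use that $T$ is a homeomorphism, so negative iterates are defined and continuous), and so is $f$. Therefore
\[
f(T^kx)=\lim_{n\to\infty}f(T^{k+m_n}x_n),\qquad
f(T^ky)=\lim_{n\to\infty}f(T^{k+m_n}y_n).
\]
Since $\|f(T^{k+m_n}x_n)-f(T^{k+m_n}y_n)\|<1/n\to 0$, we conclude $f(T^kx)=f(T^ky)$ for every $k\in\Z$, in particular for every $k\geq 0$. This contradicts the hypothesis that $f$ observes $T$, completing the proof.

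The argument is essentially routine once Proposition \ref{propObsPosExp} is in hand; the only subtlety worth flagging is the use of invertibility of $T$ to make sense of $T^{k+m_n}$ and to ensure continuity of $T^k$ for negative $k$, which is exactly what motivates passing from observability to the bilateral notion in the expansive (non-positive) setting.
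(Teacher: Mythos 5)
Your argument is correct and is precisely the adaptation the paper intends: the paper omits the proof, stating it is analogous to the contradiction argument in the second part of Proposition \ref{propObsPosExp}, and your proof is that argument with $k$ and $m_n$ ranging over $\Z$, using invertibility of $T$ exactly where you flag it. No gaps.
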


\begin{prop}
 For a homeomorphism $T\colon X\to X$ of a compact metric space and $m\geq1$ the following statements are equivalent:
\begin{enumerate}
 \item $T$ is expansive,
 \item a generic function $f\in C(X,\R^m)$ strictly bilaterally observes $T$.
 \item there exists $f\in C(X,\R^m)$ that strictly bilaterally observes $T$.
\end{enumerate}
\end{prop}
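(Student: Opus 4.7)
The plan is to mimic the proofs of Propositions \ref{propObsPosExp} and \ref{propStObsPosExp}, replacing forward iterations by bilateral ones throughout, exactly as in the preceding proposition of this subsection.

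For $(3)\Rightarrow(1)$, suppose $f\in C(X,\R^m)$ strictly bilaterally observes $T$ with precision $\epsilon>0$. Uniform continuity of $f$ on the compact space $X$ yields $\delta>0$ such that $\dist(x,y)<\delta$ implies $\|f(x)-f(y)\|<\epsilon$. Given distinct $x,y$, the hypothesis furnishes $n\in\Z$ with $\|f(T^nx)-f(T^ny)\|>\epsilon$, forcing $\dist(T^nx,T^ny)\geq\delta$. Thus $\delta/2$ is an expansivity constant. The implication $(2)\Rightarrow(3)$ is trivial, since $C(X,\R^m)$ is a nonempty Baire space and residual subsets of a Baire space are nonempty.

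The substantive direction is $(1)\Rightarrow(2)$. Assume $T$ is expansive. Mañé's theorem \cite{Ma} (with analogues in \cites{AH,Kato93}) gives $d=\dim X<\infty$. A standard expansivity argument shows that $\per_n(T)$ is finite for every $n\geq 1$: if distinct $n$-periodic points $x,y$ were sufficiently close, uniform continuity of $T,T^2,\ldots,T^{n-1}$ would put $\dist(T^kx,T^ky)$ below the expansivity constant for every $k\in\Z$ (since the orbits are $n$-periodic), contradicting expansivity; compactness then rules out infinitely many $n$-periodic points, so $\dim\per_n(T)=0<n/2$. Hence the hypotheses of Gutman's Theorem \ref{teoGutmanHomeo} are met, producing a residual set $\Omega\subseteq C(X)$ of functions observing $T$ in $2d$ steps. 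Identifying $C(X,\R^m)\cong C(X)^m$, the preimage of $\Omega$ under projection to the first coordinate is residual in $C(X,\R^m)$, and each of its elements observes $T$ (forward iteration alone suffices). The preceding proposition of this subsection then upgrades observation to strict bilateral observation.

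No genuine obstacle is anticipated: the only non-routine ingredients are the finiteness of $\per_n(T)$ for expansive homeomorphisms, which is classical, and the passage from residuality in $C(X)$ to residuality in $C(X,\R^m)\cong C(X)^m$, which is immediate from the product-of-Baire-spaces structure (any countable intersection $\Omega=\bigcap G_n$ of dense opens in $C(X)$ pulls back to $\bigcap(G_n\times C(X)^{m-1})$, a countable intersection of dense opens in the product).
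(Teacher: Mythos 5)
Your proof is correct and is essentially the argument the paper intends: the proof is omitted there as ``analogous'' to Propositions \ref{propObsPosExp}, \ref{propStObsPosExp} and \ref{propCwObs1}, i.e.\ Mañé's finite-dimensionality, zero-dimensionality of $\per_n(T)$, Gutman's Theorem \ref{teoGutmanHomeo} for generic (forward) observability lifted to $C(X,\R^m)$, the preceding proposition to upgrade to strict bilateral observation, and the uniform-continuity argument for $(3)\Rightarrow(1)$. One cosmetic point: in the finiteness of $\per_n(T)$, two points of $\per_n(T)$ need not share a period, so argue for each fixed $j\leq n$ with points of $\mathrm{Fix}(T^j)$ (where orbit distances reduce to $k=0,\ldots,j-1$) and take the finite union; this is immediate and does not affect the proof.
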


\subsubsection*{Observability of positively cw-expansive maps}
 A map $T\colon X\to X$ is \emph{positively cw-expansive} \cite{Kato93} if there is $\delta>0$ such that if $C\subset X$ is a non-trivial continuum then $\diam(T^nC)>\delta$ for some $n\geq 0$. If $T\colon X\to X$ is a positively cw-expansive map then: $\dim\per_k(T)=0$ for all $k\geq 1$ and $\dim X<\infty$. We deduce the following result.
 \begin{prop} 
  Let $X$ be a compact metric space with $\dim X\leq d$ and $T\colon X\to X$ a positively cw-expansive map. If $T$ is locally injective then a generic function in $C(X)$ observes $T$ on $X\times X\setminus \Delta_{2d}(T)$.
\end{prop}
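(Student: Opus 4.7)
The plan is to deduce this proposition as an immediate consequence of Theorem \ref{thmGutmanEndos}, in direct parallel with part (1) of Proposition \ref{propObsPosExp}. All that is really required is to verify the three hypotheses of Theorem \ref{thmGutmanEndos}: local injectivity of $T$, the dimension bound $\dim X\leq d$, and the periodic-dimension condition $\dim\per_n(T)<n/2$ for $n=1,\ldots,2d$.

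First I would note that the first two hypotheses are essentially handed to us: local injectivity is assumed in the statement, and the dimension bound $\dim X\leq d$ is part of the hypothesis as well (and, in any case, finite topological dimension is guaranteed for any space admitting a positively cw-expansive map, as recalled in the paragraph immediately preceding the proposition). The only nontrivial verification is the third hypothesis, the bound on $\dim\per_n(T)$.

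For this, I would invoke the fact asserted in the paragraph before the statement: if $T\colon X\to X$ is positively cw-expansive, then $\dim\per_k(T)=0$ for all $k\geq 1$. The underlying reason is that $\per_k(T)=\bigcup_{j=1}^k\mathrm{Fix}(T^j)$, and no $\mathrm{Fix}(T^j)$ can contain a non-trivial continuum $C$: indeed, $T^j|_C=\mathrm{id}_C$ would force $\diam(T^{jn}C)=\diam C$ for every $n\geq 0$, contradicting positive cw-expansivity applied to a sufficiently small non-trivial subcontinuum of $C$. Granting this, we have $\dim\per_n(T)=0<n/2$ for every $n\geq 1$, and in particular for $n=1,\ldots,2d$.

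With all three hypotheses of Theorem \ref{thmGutmanEndos} in hand, that theorem gives directly that the set of $f\in C(X)$ observing $T$ on $X\times X\setminus\Delta_{2d}(T)$ in $2d$ steps is residual, and a fortiori the set of $f$ that simply observe $T$ on $X\times X\setminus\Delta_{2d}(T)$ is residual, which is the conclusion. There is no real obstacle here; the proposition is simply a packaging result, and the only substantive ingredient is Kato's theorem on the zero-dimensionality of periodic points for positively cw-expansive maps, which is what allows the hypotheses of Theorem \ref{thmGutmanEndos} to be verified automatically.
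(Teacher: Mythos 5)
Your proposal is correct and follows essentially the same route as the paper, which omits the proof precisely because it is the same verification of the hypotheses of Theorem \ref{thmGutmanEndos} carried out for Proposition \ref{propObsPosExp}(1): local injectivity and $\dim X\leq d$ are assumed, and $\dim\per_n(T)=0<n/2$ holds because periodic-point sets of positively cw-expansive maps are zero-dimensional. Your added justification of that zero-dimensionality (no $\mathrm{Fix}(T^j)$ contains a nontrivial continuum, seen by passing to a small subcontinuum whose iterates, which cycle with period $j$, all stay below the expansivity constant) correctly fills in a fact the paper merely asserts with a reference to Kato.
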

 
 We give an example showing that a positively cw-expansive map may not be locally injective. 

\begin{ex}\label{exDosCirculos}
 Consider the circles $X_1=\{z\in\C:|z|=1\}$ and $X_2=\{z\in\C:|z-2|=1\}$. Define $X=X_1\cup X_2$ and $T\colon X\to X$ by 
 $$Tz=\left\{
  \begin{array}{ll}
   z^2&\text{ if }z\in X_1,\\
   2-z&\text{ if }z\in X_2.\\
  \end{array}
 \right.$$
 Note that $TX_2=X_1$ and that $T$ is positively cw-expansive on $X_1$. Also, we have that $T$ is not locally injective at $z=1$.
\end{ex}

\section{Proof of Theorem \ref{thmGutmanEndos}}\label{secGutEnd}
 This section is devoted to the proof of Theorem \ref{thmGutmanEndos}. As we said, it extends Gutman's Theorem weakening the injectivity assumption on the map $T$ to local injectivity. 
  
 For the reader familiar with the techniques of \cites{Gut15, Gut16} let us remark some differences with   our approach that gives a shorter proof. On one hand, we do not partitioned $X\times X\setminus\Delta$ into product subspaces. Instead we consider other subspaces, not necessarily disjoint, and glue them  together using Lemma \ref{quelindelof}. For example  we consider the subspaces $G_{k,l}$, roughly speaking the graph of $T^k$ (see Definition \ref{dfGkl}). On the other hand, to simplify some arguments, we consider the definition of topological dimension based on unstable values (recall Remark \ref{obsDefTopDim}). For example, we use this definition in the proof of Lemma \ref{ObsUV_grafica}. We apply this lemma to manage the cases of pairs of points that after some iterates are in a common positive orbit. In order to distinguish a periodic (or preperiodic) and a non-periodic point we introduce Lemma \ref{ObsUV_disjuntosNPvsP}, whose proof is based on Proposition \ref{haylugar} which is again a result inspired by the ideas around the definition of topological dimension based on unstable values. Finally, Lemma \ref{ObsUV_disjuntosNP} takes care of the simplest case: two non-periodic points not in the same orbit. With these lemmas we solve all the cases arising in the proof of Theorem \ref{thmGutmanEndos}. 
      
 We start with a generalization of the direct part of Theorem \ref{teoCharTopDim}.
  
\begin{lem}\label{esquivaAff}
 Let $U$ be a compact metric space of $\dim U\leq r$ and $H_1,\ldots ,H_n\subseteq\R^{r+s+1}$ affine subspaces such that $\dim H_i\leq s$, $i=1,\ldots,n$. Then the set
  $$\Omega=\{F\in C(U,\R^{r+s+1}):F(U)\cap H_i=\varnothing\text{ for all }i=1,\ldots,n\}$$
 is open and dense.
\end{lem}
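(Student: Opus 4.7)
The plan is to separate openness from density, and to reduce density to a single-affine-subspace perturbation that invokes Theorem \ref{teoCharTopDim}. Openness is immediate: for $F\in\Omega$ the image $F(U)$ is compact, the union $\bigcup_{i=1}^n H_i$ is closed (finite union of closed affine subspaces), and they are disjoint, so they lie a positive distance $2\delta>0$ apart, and every $G\in C(U,\R^{r+s+1})$ with $\|G-F\|_\infty<\delta$ remains in $\Omega$.

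For density I would argue by induction on $n$. In the inductive step, given $F\in C(U,\R^{r+s+1})$ and $\epsilon>0$, I use the induction hypothesis to produce $G_0$ within $\epsilon/2$ of $F$ that avoids $H_1,\ldots,H_{n-1}$, invoke openness (already proved for this collection) to obtain a tolerance $\delta\in(0,\epsilon/2)$ under which avoidance of the first $n-1$ subspaces persists, and finally apply the base case $n=1$ to $G_0$ with tolerance $\delta$ and remaining subspace $H_n$ to produce the required $G\in\Omega$.

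The heart of the argument is therefore the base case. Let $V\subseteq\R^{r+s+1}$ denote the linear direction of $H_1$, so $\dim V\leq s$, and let $W$ be its orthogonal complement, so that $\dim W\geq r+1$. Write $\pi\colon\R^{r+s+1}\to W$ for the orthogonal projection and set $q:=\pi(H_1)$, a single point (since $V=\ker\pi$ collapses $H_1=p+V$ to $\pi(p)$). The composition $\pi\circ F\colon U\to W$ sends a compact metric space of topological dimension at most $r$ into a Euclidean space of dimension at least $r+1$, so Theorem \ref{teoCharTopDim} (applied after a further orthogonal projection of $W$ onto an $(r+1)$-dimensional subspace when $\dim W>r+1$) gives that every point of $W$ is an unstable value of $\pi\circ F$. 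In particular, $q$ may be avoided: there exists $h\in C(U,W)$ with $\|h-\pi\circ F\|_\infty$ arbitrarily small and $q\notin h(U)$. Setting $G:=(I-\pi)\circ F+h$ then yields $\pi\circ G=h$, hence $G(U)\cap H_1=\varnothing$, while $\|G-F\|_\infty=\|h-\pi\circ F\|_\infty$ is under control.

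I do not foresee a substantial obstacle. The one delicate bit of bookkeeping is matching norms when lifting a perturbation of $\pi\circ F$ to a perturbation of $F$; using the orthogonal projection (of operator norm one in the Euclidean norm) and replacing only the $W$-component keeps this transparent, and the inductive scheme absorbs the passage from a single affine subspace to the full finite collection.
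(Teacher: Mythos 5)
Your proposal is correct and follows essentially the same route as the paper: openness from compactness of $F(U)$ and closedness of the $H_i$, reduction to a single affine subspace (your induction with the openness tolerance is just a spelled-out version of the paper's ``it suffices to consider $n=1$,'' i.e.\ finite intersections of open dense sets are open dense), and density via the orthogonal decomposition along the direction of $H_1$, perturbing only the component in the orthogonal complement using Theorem \ref{teoCharTopDim}. The only cosmetic difference is that you track the projected point $q$ instead of translating $H_1$ to a linear subspace and avoiding $0$, and your extra projection onto an $(r+1)$-dimensional subspace is unnecessary since the theorem applies directly with $d=\dim W-1\geq r$.
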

\begin{proof}
 Clearly, it suffices to consider the case $n=1$. Let $H=H_1$. As $H$ is closed and $U$ is compact we have that $\Omega$ is open. Performing a translation of coordinates if necessary we may suppose that $H$ is a linear subspace. Let $H^{\perp}$ be the orthogonal complement of $H$. Given a function $F\in C(U,\R^{r+s+1})$ decompose $F=F_H+F_{H^{\perp}}$, where $F_H\colon U\to H$ and $F_{H^{\perp}}\colon U\to H^{\perp}$ are the compositions of $F$ with the orthogonal projections on $H$ and $H^{\perp}$ respectively. Since $\dim H\leq s$ we have $\dim H^{\perp}\geq r+1$.  Then, given that $\dim U\leq r$, by 
 Theorem \ref{teoCharTopDim} we can perturb $F_{H^{\perp}}$ to get $\widetilde F_{H^{\perp}}\in C(U,H^{\perp})$ so that $0\notin\widetilde F_{H^{\perp}}(U)$. Then taking $\widetilde F=F_H+\widetilde F_{H^{\perp}}$ we have that $\widetilde F$ is a perturbation of $F$ and $\widetilde F(U)\cap H=\varnothing$. This proves that $\Omega$ is dense.
\end{proof}

 The next result implies that two finite dimensional metric spaces immersed in $\R^n$ are generically disjoint if $n$ is big enough.

\begin{prop}\label{haylugar}
 Let $U,V$ be compact metric spaces with $\dim U +\dim V\leq n$ and $S\subset\R^{n+1}$ a convex set. Then the set
  $$\Omega=\{(F,G)\in C(U,\R^{n+1})\times C(V,S):F(U)\cap G(V)=\varnothing\}$$
 is open and dense in $C(U,\R^{n+1})\times C(V,S)$.
\end{prop}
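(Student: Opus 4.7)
To prove Proposition \ref{haylugar} I would argue openness and density separately. Openness is immediate: if $(F,G)\in\Omega$, then $F(U)$ and $G(V)$ are disjoint compact subsets of $\R^{n+1}$, so $\eta:=\dist(F(U),G(V))>0$, and any pair within uniform distance $\eta/2$ of $(F,G)$ still lies in $\Omega$.

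For density, given $(F,G)$ and $\epsilon>0$, my plan is a two-step perturbation: first replace $G$ by a nearby $\tilde G\in C(V,S)$ whose image lies in a finite union of affine subspaces of $\R^{n+1}$ of dimension at most $\dim V$, and then apply Lemma \ref{esquivaAff} to push $F$ off that union. For step one, I would invoke the covering-dimension characterization of $\dim V$ (recall Remark \ref{obsDefTopDim}) to pick a finite open cover $\{B_\alpha\}_{\alpha\in A}$ of $V$ of order at most $\dim V+1$ and of mesh small enough that $\|G(v)-G(v')\|<\epsilon$ whenever $v,v'$ lie in a common $B_\alpha$. Then, with a subordinate partition of unity $\{\phi_\alpha\}$ and chosen points $v_\alpha\in B_\alpha$, I would define
$$\tilde G(v)=\sum_\alpha\phi_\alpha(v)\,G(v_\alpha).$$
The convexity of $S$ is used at precisely this point: since each $G(v_\alpha)\in S$, the convex combination $\tilde G(v)$ remains in $S$. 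A modulus-of-continuity estimate gives $\|\tilde G-G\|_\infty<\epsilon$, and the order bound means that at each $v$ at most $\dim V+1$ of the $\phi_\alpha$ are nonzero, so $\tilde G(v)$ lies in a simplex of the geometric realization of the nerve of $\{B_\alpha\}$ spanned by at most $\dim V+1$ of the points $G(v_\alpha)$. Letting $H_1,\dots,H_N\subset\R^{n+1}$ be the affine hulls of these finitely many realized simplices, I obtain $\dim H_i\leq\dim V$ and $\tilde G(V)\subset H_1\cup\cdots\cup H_N$.

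For step two, I would apply Lemma \ref{esquivaAff} with $r=\dim U$ and $s=n-\dim U$: each $H_i$ has $\dim H_i\leq\dim V\leq s$ and the ambient $\R^{r+s+1}$ is exactly $\R^{n+1}$, so the lemma yields $\tilde F\in C(U,\R^{n+1})$ with $\|\tilde F-F\|_\infty<\epsilon$ and $\tilde F(U)\cap H_i=\varnothing$ for every $i$. Consequently
$$\tilde F(U)\cap\tilde G(V)\subseteq\bigcup_i\bigl(\tilde F(U)\cap H_i\bigr)=\varnothing,$$
so $(\tilde F,\tilde G)\in\Omega$ is the required $\epsilon$-approximation of $(F,G)$.

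The main obstacle is step one: I need to simultaneously extract a fine open cover of $V$ whose order is controlled by the covering dimension, use the nerve construction so that $\tilde G$ factors through a finite union of $\leq\dim V$-dimensional affine pieces, and exploit convexity of $S$ to keep $\tilde G$ inside $S$. Once that piecewise-linear picture is in place, Lemma \ref{esquivaAff} delivers the perturbation of $F$ essentially for free, because the dimensional budget $\dim U+\dim V\leq n$ is exactly what the lemma demands to guarantee $\dim H_i^{\perp}\geq\dim U+1$ in $\R^{n+1}$.
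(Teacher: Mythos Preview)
Your proposal is correct and follows essentially the same route as the paper: first approximate $G$ by a partition-of-unity barycentric map $\tilde G$ (using convexity of $S$ and a fine cover of $V$ of order $\leq\dim V$ to land $\tilde G(V)$ in a finite union of affine subspaces of dimension $\leq\dim V$), then invoke Lemma \ref{esquivaAff} to push $F$ off that union. The only cosmetic differences are your explicit $\eta/2$ argument for openness and your choice $s=n-\dim U$ rather than $s=\dim V$ when applying the lemma, both of which are immaterial.
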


\begin{proof}
 As $U$ and $V$ are compact, we have that $\Omega$ is open. To prove that it is dense, let $r=\dim U$ and $s=\dim V$. Take $(F,G)\in C(U,\R^{n+1})\times C(V,S)$. As $\dim V=s$ we can perturb $G$ to get $\widetilde G$ as in the proof of \cite{HW}*{(C) on pp. 57-59} taking a finite cover $\beta$ of $V$ of $\ord(\beta)\leq s$ consisting of small open balls, for each $W\in\beta$ a point $p_W\in G(W)\subseteq S$ and letting $\widetilde G(y)=\sum_{W\in\beta}\omega_W(y)p_W$ for $y\in V$, where $\{\omega_W:W\in\beta\}$ is a partition of the unity subordinate to $\beta$. Note that as $p_W\in S$ for $W\in\beta$ and $S$ is convex we have $\widetilde G(V)\subseteq S$, because $G(y)$ is a convex linear combination of the points $p_W$ for each $y\in V$. Moreover, as $\ord(\beta)\leq s$, each $\widetilde G(y)$ is the convex linear combination of at most $r+1$ points $p_W$. Then each $\widetilde G(y)$ lies in one of the affine subspaces of dimension less o equal to $s$ generated by $s+1$ points $p_W$. That is, $\widetilde G(V)\subseteq\bigcup_iH_i$, for a finite family $\{H_i\}$ of affine subspaces with $\dim H_i\leq s$. Then, as $\dim U=r$ and $r+s\leq n$, by Lemma \ref{esquivaAff} we can perturb $F$ to get $\widetilde F$ so that $\widetilde F(U)$ does not meet $\bigcup_iH_i$, and consequently $\widetilde F(U)\cap\widetilde G(V)=\varnothing$. This proves that $\Omega$ is dense. 
\end{proof}

\begin{rmk}
 For the case $\dim U=n$, $V=\{p\}$ and $S=\{0\}$, Proposition \ref{haylugar} becomes equivalent to \cite{HW}*{Theorem VI.1 on p.\;75}.
\end{rmk}

 A proof of the following lemma can be found in \cite{Gut15}*{Lemma A.5}.

\begin{lem}\label{tietze}
 Let $X$ be a normal topological space, $f\colon X\to\R$ a bounded continuous function, $A\subseteq X$ a closed subset and $g_0\colon A\to\R$ a bounded continuous function such that $d(f|_A,g_0)<\epsilon$, where $\epsilon>0$. Then there exists a continuous and bounded extension $g\colon X\to\R$ of $g_0$ such that $d(f,g)<\epsilon$.
\end{lem}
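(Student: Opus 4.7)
The plan is to reduce the statement to the classical Tietze extension theorem (in its bound-preserving form) applied not to $g_0$ itself but to the difference $h_0 \colon A \to \R$ defined by $h_0(a) = g_0(a) - f(a)$. Since both $g_0$ and $f|_A$ are bounded and continuous, $h_0$ is bounded and continuous on the closed subset $A$, and the hypothesis $d(f|_A, g_0) < \epsilon$ unpacks as $c := \sup_{a \in A}|h_0(a)| < \epsilon$, where $d$ denotes the sup distance between bounded real-valued functions.

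Next I would invoke the bounded Tietze extension theorem: on any normal space, a continuous function on a closed subset with values in a compact interval $[-c,c]$ admits a continuous extension to the whole space with values in the same interval. Applying this to $h_0 \colon A \to [-c,c]$ yields a continuous $h \colon X \to \R$ with $h|_A = h_0$ and $\sup_{x \in X}|h(x)| \leq c$.

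Finally, I would set $g := f + h$. This is a sum of bounded continuous functions on $X$, hence bounded and continuous, and for every $a \in A$ we have $g(a) = f(a) + g_0(a) - f(a) = g_0(a)$, so $g$ extends $g_0$. The error estimate follows from
\[
\sup_{x \in X} |g(x) - f(x)| \;=\; \sup_{x \in X}|h(x)| \;\leq\; c \;<\; \epsilon,
\]
which gives $d(f,g) < \epsilon$. The only point that requires care is keeping the strict inequality throughout: the hypothesis furnishes $c < \epsilon$ on the nose, and the bound-preserving version of Tietze propagates the non-strict bound $|h| \leq c$ to $X$, which is enough. There is no real obstacle, since the conclusion is essentially a restatement of Tietze with control on the perturbation size.
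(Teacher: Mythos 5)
Your proof is correct. Note that the paper does not prove this lemma at all\,---\,it simply cites \cite{Gut15}*{Lemma A.5}\,---\,and your argument is the standard one for such a statement: pass to the difference $h_0=g_0-f|_A$, extend it by the bound-preserving form of Tietze's theorem on a normal space, and set $g=f+h$, with the strict inequality preserved because $\sup_A|h_0|<\epsilon$ is carried over unchanged to the extension.
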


The following result is generalization of \cite{Coorn}*{Lemma 8.3.4}.

\begin{lem}
\label{ObsUV_disjuntosNP}
 Let $X$ be a compact metric space and $T\colon X\to X$ a continuous map. Let $U,V\subseteq X$ be compact subsets with $\dim U\leq n/2$, $\dim V\leq n/2$, $U,\ldots,T^nU,V,\ldots,T^nV$ pairwise disjoint and such that $T$ is injective on the sets $U,\ldots,T^{n-1}U,V,\ldots,T^{n-1}V$. Then the set
  $$\Omega=\{f\in C(X):f\text{ observes }T\text{ on }U\times V\text{ in }n\text{ steps}\}$$
 is open and dense.
\end{lem}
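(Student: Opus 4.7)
The plan is to recast ``$f$ observes $T$ on $U\times V$ in $n$ steps'' as a disjointness condition for images in $\R^{n+1}$, and then apply Proposition~\ref{haylugar}. For $f\in C(X)$ set
$$F_f\colon U\to\R^{n+1},\qquad F_f(x)=\bigl(f(x),f(Tx),\ldots,f(T^nx)\bigr),$$
and analogously $G_f\colon V\to\R^{n+1}$. Then $f\in\Omega$ if and only if $F_f(U)\cap G_f(V)=\varnothing$.

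Openness is immediate: compactness of $U,V$ makes $F_f(U),G_f(V)$ compact, so their disjointness gives $\dist(F_f(U),G_f(V))>0$, and small uniform perturbations of $f$ induce small uniform perturbations of $F_f,G_f$, preserving disjointness.

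For density, the decisive point is that although $F_f$ and $G_f$ are built from a single function $f$, they can be perturbed \emph{independently}. Injectivity of $T$ on $U,\ldots,T^{n-1}U,V,\ldots,T^{n-1}V$ makes each $T^k|_U\colon U\to T^kU$ and $T^k|_V\colon V\to T^kV$ a homeomorphism, and by hypothesis the $2(n+1)$ compact sets $T^kU,T^kV$ ($0\leq k\leq n$) are pairwise disjoint. Hence, given any $\widetilde F=(\widetilde f_0,\ldots,\widetilde f_n)\in C(U,\R^{n+1})$ and $\widetilde G=(\widetilde g_0,\ldots,\widetilde g_n)\in C(V,\R^{n+1})$ uniformly close to $F_f,G_f$, the prescription
$$\bar f=\widetilde f_k\circ (T^k|_U)^{-1}\text{ on }T^kU,\qquad \bar f=\widetilde g_k\circ (T^k|_V)^{-1}\text{ on }T^kV$$
defines a continuous function on the closed set $A=\bigcup_{k=0}^n(T^kU\cup T^kV)$ that is uniformly close to $f|_A$, and by Lemma~\ref{tietze} extends to some $\bar f\in C(X)$ uniformly close to $f$ with $F_{\bar f}=\widetilde F$ and $G_{\bar f}=\widetilde G$.

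Since $\dim U+\dim V\leq n$, Proposition~\ref{haylugar} (applied with the convex set $S=\R^{n+1}$) supplies arbitrarily close $\widetilde F,\widetilde G$ with $\widetilde F(U)\cap\widetilde G(V)=\varnothing$; the resulting $\bar f$ then lies in $\Omega$, establishing density. The main obstacle is exactly the decoupling step described above: one must verify that perturbing $F_f$ and $G_f$ coordinate-by-coordinate does correspond to a genuine perturbation of a single continuous function on $X$, and this is where the pairwise disjointness of the $T^kU,T^kV$ and the injectivity of $T$ on the orbit pieces are used in an essential way. Once the decoupling is in hand, Proposition~\ref{haylugar} carries out the geometric content.
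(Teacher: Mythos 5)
Your proposal is correct. It shares with the paper's proof the essential decoupling mechanism: because the sets $U,\ldots,T^nU,V,\ldots,T^nV$ are pairwise disjoint and $T$ restricts to homeomorphisms $T^kU\to T^{k+1}U$, $T^kV\to T^{k+1}V$, any prescribed perturbation of the time-series data on these pieces is realized by a single function via Lemma \ref{tietze}, exactly as you describe. Where you diverge is in the dimension-theoretic input. The paper sets $W=U\cup V$, observes $\dim W\leq n/2$, and invokes the embedding theorem (Theorem \ref{teoHWEncaje}) to perturb $f_0^n|_W$ into an injective map $\widetilde F\in C(W,\R^{n+1})$; the resulting $\bar f$ then observes $T$ on all of $W\times W\setminus\Delta$ in $n$ steps, which contains $U\times V$. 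You instead apply Proposition \ref{haylugar} with $S=\R^{n+1}$ to separate only $F(U)$ from $G(V)$. Your route proves exactly the stated conclusion under the formally weaker hypothesis $\dim U+\dim V\leq n$ (each dimension need not be bounded by $n/2$ separately), and it unifies this lemma with Lemma \ref{ObsUV_disjuntosNPvsP}, which the paper also proves via Proposition \ref{haylugar}; the paper's route buys a slightly stronger conclusion (pairs within $U\times U$ and $V\times V$ are also distinguished), though that extra strength is not needed in the proof of Theorem \ref{thmGutmanEndos}. Both arguments are complete and correct.
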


\begin{proof}
 Firstly note that, as $U\times V$ is compact, $\Omega$ is open. To prove that $\Omega$ is dense, consider a function $f\in C(X)$. Let $W=U\cup V$ and $F\colon W\to\R^{n+1}$, $F=f_0^n|_W$. As $\dim W\leq n/2$, by Theorem \ref{teoHWEncaje}, we can perturb $F$ to get an embedding $\widetilde F\in C(W,\R^{n+1})$. Let $\tilde f\colon W_0^n\to\R$ such that $\tilde f(T^ix)={\widetilde F}_i(x)$ if $x\in W$ and $i=0,\ldots,n$, where we write $\widetilde F=(\widetilde F_0,\ldots,\widetilde F_n)$. Then $\tilde f$ is a perturbation of $f|_{W_0^n}$ and by Lemma \ref{tietze} we can extend $\tilde f$ to a perturbation $\bar f\in C(X)$ of $f$. We have that $\bar f_0^n|_W=\widetilde F$, therefore, as $\widetilde F$ is injective, $\bar f$ observes $T$ on $W\times W\setminus\Delta$ in $n$ steps, and in particular on $U\times V$, that is $\bar f\in\Omega$. This proves that $\Omega$ is dense.
\end{proof}

\begin{df}
\label{dfGkl}
 Given a map $T\colon X\to X$ and $k,l\geq0$ we define 
  \begin{eqnarray*}
   G_k=\{(x,T^kx)\in X\times X:x,\ldots,T^kx\text{ are distinct}\}\qquad\text{and}\\
   G_{k,l}=\{p\in X\times X:T^lp\in G_k\text{ and }T^{l-1}p\notin G_k\},
  \end{eqnarray*}
 where we denote $Tp=(Tx,Ty)$ if $p=(x,y)$.
\end{df}

 For a map $T\colon X\to X$, a subset $U\subseteq X$ and $n\geq0$ we define
  $$U_0^n=U\cup TU\cup\cdots\cup T^nU.$$

\begin{lem}\label{ObsUV_grafica}
 Let $X$ be a compact metric space and $T\colon X\to X$ a continuous map. Let $U,V\subseteq X$ be compact subsets with $\dim U\leq n$ such that $T^{l+k}U=T^lV$ and $U,\ldots,T^{l+k-1}U,V,\ldots,T^nV$ are pairwise disjoint, where $0\leq l<l+k\leq n$. Suppose also that $T$ is injective on the sets $U,\ldots,T^{l+k-1}U,V,\ldots,T^{n-1}V$. Then the set
  $$\Omega=\{f\in C(X):f\text{ observes }T\text{ on } U\times V\cap G_{k,l} \text{ in }n\text{ steps}\}$$
 is open and dense.
\end{lem}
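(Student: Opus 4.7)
The plan is to reduce the observability condition to the non-vanishing of a single continuous map $\Psi_f\colon U\to\R^{n+1}$, and then apply Theorem \ref{teoCharTopDim}. Since $T$ is injective on $V,TV,\ldots,T^{n-1}V$, the iterate $T^l|_V\colon V\to T^lV=T^{l+k}U$ is a homeomorphism, so $\phi:=(T^l|_V)^{-1}\circ T^{l+k}|_U\colon U\to V$ is a continuous map with $T^l\phi(x)=T^{l+k}x$ and $T^ix=T^{i-k}\phi(x)$ for $i\in[l+k,n]$. First I would verify that
\[
 U\times V\cap G_{k,l}=\{(x,\phi(x)):x\in U\}.
\]
The inclusion $\supseteq$ uses the pairwise disjointness: for $x\in U$ the iterates $T^0x,\ldots,T^{l+k}x$ lie in the pairwise disjoint sets $T^0U,\ldots,T^{l+k-1}U,T^{l+k}U=T^lV$ and are automatically distinct, so $T^l(x,\phi(x))\in G_k$; for $l\ge 1$ the disjointness of $T^{l-1}V$ from $T^{l+k-1}U$ forces $T^{l-1}\phi(x)\ne T^{l+k-1}x$, so $T^{l-1}(x,\phi(x))\notin G_k$. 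The reverse inclusion follows from the injectivity of $T^l|_V$. Thus observability in $n$ steps on $U\times V\cap G_{k,l}$ is equivalent to $0\notin\Psi_f(U)$, where $\Psi_{f,i}(x)=f(T^ix)-f(T^i\phi(x))$, and openness of $\Omega=\{f:0\notin\Psi_f(U)\}$ is automatic from compactness of $U$ and continuity of $f\mapsto\Psi_f$.

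For density I would apply Theorem \ref{teoCharTopDim} to $\Psi_f\in C(U,\R^{n+1})$: since $\dim U\le n$, the origin is an unstable value of $\Psi_f$, so for every $\delta>0$ there is $\tilde\Psi\in C(U,\R^{n+1})$ with $\|\tilde\Psi-\Psi_f\|<\delta$ and $0\notin\tilde\Psi(U)$. I would realize such a $\tilde\Psi$ as $\Psi_{\bar f}$ for a small perturbation $\bar f$ of $f$ by exploiting the pairwise disjointness of $T^iU$ ($0\le i<l+k$) and $T^jV$ ($0\le j\le n$): adding $g_i\in C(T^iU)$ to $f|_{T^iU}$ modifies only $\Psi_{f,i}$ (by $g_i\circ T^i|_U$), while adding $h_j\in C(T^jV)$ to $f|_{T^jV}$ modifies $\Psi_{f,j}$ (by $-h_j\circ T^j\circ\phi$) and, when $l\le j\le n-k$, also $\Psi_{f,j+k}$ (by $+h_j\circ T^j\circ\phi$, via $T^{j+k}x=T^j\phi(x)$). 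Setting $\gamma_m:=g_m\circ T^m|_U$ and $\eta_m:=h_m\circ T^m\circ\phi$, the target increment $\Delta\Psi_m=\tilde\Psi_m-\Psi_{f,m}$ thus gives the triangular system
\[
 \Delta\Psi_m=\gamma_m-\eta_m\quad(0\le m<l+k),\qquad \Delta\Psi_m=\eta_{m-k}-\eta_m\quad(l+k\le m\le n).
\]
Taking $\eta_m=0$ and $\gamma_m=\Delta\Psi_m$ for $m<l+k$, and then $\eta_m=\eta_{m-k}-\Delta\Psi_m$ inductively for $m\ge l+k$, solves it with sup norms bounded by a constant (depending on $n$ and $k$) times $\delta$. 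Using the injectivity of $T^i|_U$ and $T^j\circ\phi$ one recovers $g_i,h_j$ on the injective images, extends them to $T^iU,T^jV$ by Lemma \ref{tietze}, and then to $X$ without increasing the sup norm; the resulting $\bar f$ lies in $\Omega$ with $\|\bar f-f\|$ controlled by $\delta$.

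The main obstacle I expect is the identification $U\times V\cap G_{k,l}=\{(x,\phi(x)):x\in U\}$ in the first paragraph: it simultaneously uses all three hypotheses (pairwise disjointness, injectivity on $V,\ldots,T^{n-1}V$, and $T^{l+k}U=T^lV$) and collapses the intricate definition of $G_{k,l}$ to a single graph condition. Once this reduction is in place, openness is immediate from compactness of $U$ and density is solved by the explicit triangular system above.
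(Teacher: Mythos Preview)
Your argument is correct and follows essentially the same route as the paper. Your map $\phi$ coincides with the paper's $t_0$, and more generally $t_i(T^ix)=T^i\phi(x)$, so your $\Psi_f$ is exactly $-\delta$ in the paper's notation; both proofs then invoke Theorem~\ref{teoCharTopDim} on $U$ and reconstruct the perturbation of $f$ on the disjoint pieces $T^iU,T^jV$ before extending via Lemma~\ref{tietze}. Two small remarks: your explicit identification $U\times V\cap G_{k,l}=\{(x,\phi(x)):x\in U\}$ makes precise what the paper leaves to the reader, and since $T^i|_U\colon U\to T^iU$ and $T^j\circ\phi\colon U\to T^jV$ are already surjective (because $\phi$ is a homeomorphism onto $V$), the intermediate extension ``to $T^iU,T^jV$'' is unnecessary---you can pass directly to the extension over $X$.
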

\begin{proof}
 Note that by the injectivity assumptions $T$ restricts to homeomorphisms $T^iW\to T^{i+1}W$ for $i=0,\ldots,n-1$ and $W=U,V$. Thus, as $U$ and $V$ are compact we have that $U\times V\cap G_{k,l}$ is compact, and therefore $\Omega$ is open. 
 
 Consider the homeomorphisms $t_i\colon T^iU\to T^iV$, $i=0,\ldots,n$, given by 
  $$t_i=\left\{\begin{array}{ll}
         T^iU\stackrel{T^{l+k-i}}{\longrightarrow}T^{l+k}U=T^lV\stackrel{T^{i-l}}{\longrightarrow}T^iV &\text{if } i< l,\\
         T^iU\stackrel{T^k}{\longrightarrow}T^iV &\text{if } i\geq l,
        \end{array}\right.$$
 and let $W=U_0^n\cup V_0^n$. 
 In Figure \ref{fig:lemay} we illustrate this definition.
 Given a function $f\in C(X)$ define $\delta=(\delta_0,\ldots,\delta_n)\in C(U,\R^{n+1})$ as $\delta_i(x)=f(t_iT^ix)-f(T^ix)$ for $x\in U$ and $i=0,\ldots,n$. We call $\delta$ the \emph{$\delta$-function associated to $f|_W$} (only the
 \thisfloatsetup{capposition=beside, capbesideposition={right,center},floatwidth=.75\textwidth,capbesidewidth=.15\textwidth}
 \begin{figure}[h]
   \centering
   \includegraphics[width=0.75\textwidth]{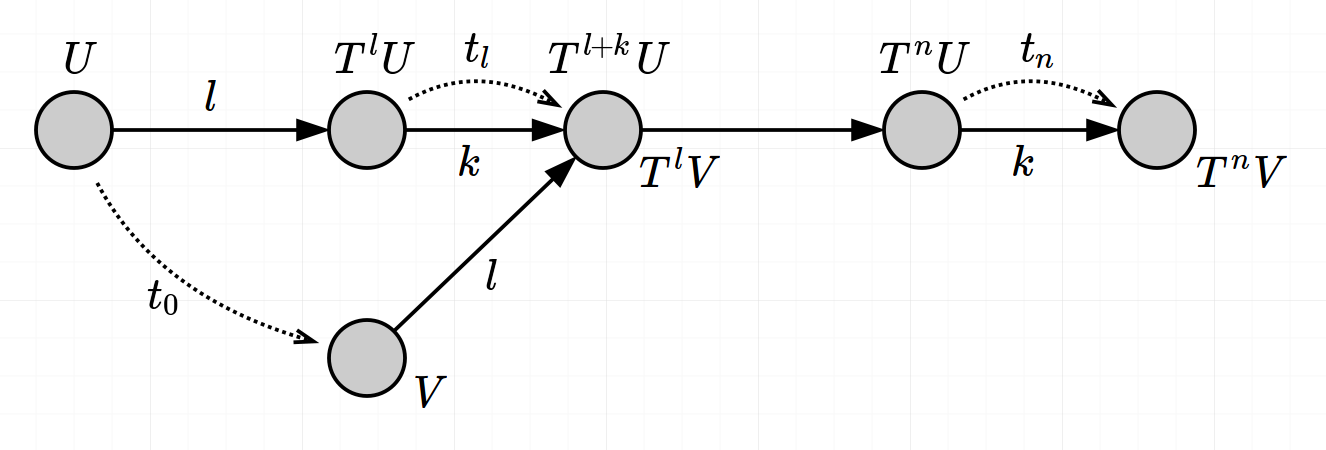}
   \caption{}
   \label{fig:lemay}
 \end{figure}
 values at $W$ are involved). It is not difficult to see that the requirement that $f$ observes $T$ on $U\times V\cap G_{k,l}$ in $n$ steps is equivalent to the condition $0\notin\delta(U)$. If the later is not the case, as $\dim U\leq n$, we can apply Theorem \ref{teoCharTopDim} to get a perturbation $\tilde\delta\in C(U,\R^{n+1})$ of $\delta$ such that $0\notin\tilde\delta(U)$. Define $\tilde f\in C(W)$  as
  $$\tilde f|_{T^iU}=f|_{T^iU},\text{ for }i=0,\ldots,l+k-1,$$
  $$\tilde f|_{T^iV}(t_iT^ix)=f|_{T^iU}(T^ix)+\tilde\delta_i(x),\text{ for }x\in U\text{ and }i=0,\ldots,l,$$
 and on $V_{l+1}^n=T^{l+1}V\cup\cdots\cup T^nV=T^{k+l+1}U\cup\cdots\cup T^{k+n}U$ inductively as
  $$\tilde f|_{T^iV}(T^{k+i}x)=\tilde f|_{T^iU}(T^ix)+\tilde\delta_i(x),\text{ for }x\in U\text{ and }i=l+1,\ldots,n.$$
 One can check that $\tilde f$ is a perturbation of $f|_W$ and that the $\delta$-function associated to $\tilde f$ is precisely $\tilde\delta$. 
 Then, by Lemma \ref{tietze}, we can extend $\tilde f$ to a perturbation $\bar f\in C(X)$ of $f$, which will observe $T$ on $U\times V\cap G_{k,l}$ in $n$ steps because $0\notin\tilde\delta(U)$. This proves that $\Omega$ is dense.
\end{proof}

\begin{df}
 Given a map $T\colon X\to X$ and $n,k\in\N$ we define
  $$H_n=\{x\in X:x=T^nx\text{ and } x,\ldots,T^{n-1}x\text{ are distinct}\},$$
  $$H_{n,k}=\{x\in X:T^kx\in H_n\text{ and }T^{k-1}x\notin H_n\}.$$
 The points $x\in H_{n,k}$ will be called \emph{preperiodic points of period $n$.} 
\end{df}

\begin{lem}\label{ObsUV_disjuntosNPvsP}
 Let $X$ be a compact metric space and $T\colon X\to X$ a continuous map. Let $U\subseteq X$ and $V\subseteq H_{m,k}$ be compact subsets with $\dim U+\dim V\leq n$ such that $U,\ldots,T^nU,V,\ldots,T^{m+k-1}V$ are pairwise disjoint, where $0\leq k\leq m+k\leq n$. Suppose also that $T$ is injective on the sets $U,\ldots,T^{n-1}U,V,\ldots,T^{k-1}V$. Then the set
  $$\Omega=\{f\in C(X):f\text{ observes }T\text{ on }U\times V\text{ in }n\text{ steps}\}$$
 is open and dense.
\end{lem}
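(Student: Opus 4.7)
Openness of $\Omega$ follows as usual from the compactness of $U\times V$ together with continuity of the evaluation map $f\mapsto f_0^n$. For density, the key observation is that the preperiodicity of $V$ forces $f_0^n|_V$ to take values in a proper affine subspace of $\R^{n+1}$; I then separate $f_0^n(U)$ from $f_0^n(V)$ by applying Proposition \ref{haylugar}, and finally realize the resulting perturbation as the time series of a perturbation of $f$.

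More concretely, since every $y\in V\subseteq H_{m,k}$ satisfies $T^{k+m}y=T^ky$, the map $f_0^n|_V$ automatically takes values in the linear (hence convex) subspace
$$S=\bigl\{(z_0,\ldots,z_n)\in\R^{n+1}:z_{j+m}=z_j\text{ for all }k\leq j\leq n-m\bigr\}\subseteq\R^{n+1}.$$
Set $F=f_0^n|_U\in C(U,\R^{n+1})$ and $G=f_0^n|_V\in C(V,S)$. Since $\dim U+\dim V\leq n$, Proposition \ref{haylugar} applied with target space $S$ for $G$ yields perturbations $\widetilde F\in C(U,\R^{n+1})$ and $\widetilde G\in C(V,S)$, arbitrarily close to $F$ and $G$ respectively, with $\widetilde F(U)\cap\widetilde G(V)=\varnothing$.

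It remains to convert $(\widetilde F,\widetilde G)$ into a single perturbation $\bar f\in C(X)$ of $f$ satisfying $\bar f_0^n|_U=\widetilde F$ and $\bar f_0^n|_V=\widetilde G$. On $W=U_0^n\cup V_0^{m+k-1}$ I would define $\tilde f(T^ix)=\widetilde F_i(x)$ for $x\in U$, $0\leq i\leq n$, and $\tilde f(T^iy)=\widetilde G_i(y)$ for $y\in V$, $0\leq i\leq m+k-1$. The injectivity hypothesis on $T|_{T^jU}$ for $j<n$ and on $T|_{T^jV}$ for $j<k$, together with the automatic injectivity of $T$ on the periodic orbit $H_m\supseteq T^kV$ (if $Ta=Tb$ with $a,b\in H_m$, then $a=T^ma=T^{m-1}(Ta)=T^mb=b$), makes both formulas unambiguous, and the two cases do not conflict because $U_0^n$ and $V_0^{m+k-1}$ are disjoint by hypothesis. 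Then Lemma \ref{tietze} extends $\tilde f$ to $\bar f\in C(X)$ close to $f$. By construction $\bar f_0^n(U)\cap\bar f_0^n(V)=\widetilde F(U)\cap\widetilde G(V)=\varnothing$, so $\bar f\in\Omega$.

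The main obstacle is precisely the self-consistency of $\tilde f$ on the periodic tail of the orbit of $V$: for $i\geq m+k$ one has $T^iV=T^{i-m}V$, so an unconstrained perturbation of $G$ would assign two different values to the same point, namely $\widetilde G_i(y)$ and $\widetilde G_{i-m}(y)$. The condition $\widetilde G\in S$ is exactly the identity $\widetilde G_i=\widetilde G_{i-m}$ for $i\geq k+m$ needed to resolve this conflict, and it is precisely the convex-valued version of Proposition \ref{haylugar}, rather than an ambient perturbation in $\R^{n+1}$, that delivers a perturbation respecting this periodicity.
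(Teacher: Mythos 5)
Your proof is correct and follows essentially the same route as the paper's: you exploit $V\subseteq H_{m,k}$ to see that $f_0^n|_V$ lands in the convex set $S$ of $m$-periodic-tail vectors, apply Proposition \ref{haylugar} to separate the perturbed images $\widetilde F(U)$ and $\widetilde G(V)$, and then realize $(\widetilde F,\widetilde G)$ as the time series of an extension $\bar f$ obtained via Lemma \ref{tietze}. Your extra remarks on well-definedness of $\tilde f$ (injectivity of $T$ on $H_m$ and disjointness of $U_0^n$ and $V_0^{m+k-1}$) only make explicit what the paper leaves implicit.
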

\begin{proof}
 Note that as $V\subseteq H_{m,k}$ then $T^kV\subseteq H_m$. 
 Therefore $T$ is injective on the sets $T^kV,\ldots,T^{m+k-1}V$. This together with the injectivity assumptions on $T$ imply that $T$ restricts to homeomorphisms $T^iU\to T^{i+1}U$ for $i=0,\ldots,n-1$ and $T^iV\to T^{i+1}V$ for $i=0,\ldots,m+k-1$.

 Given $f\in C(X)$ let $F\in C(U,\R^{n+1})$ and $G\in C(V,\R^{n+1})$ be given by $F=f_0^n|_U$ and $G=f_0^n|_V$. As $V\subseteq H_{m,k}$ we have 
  $$G(V)\subseteq S=\{(x_0,\ldots,x_n)\in\R^{n+1}:x_i=x_j\text{ if } i=j\;(\textrm{mod }m)\text{ and } i,j\geq k\}$$
 and then $G\in C(V,S)$. As $\dim U + \dim V\leq n$, by Proposition \ref{haylugar} we can perturb $F$ and $G$ to get $\widetilde F\in C(U,\R^{n+1})$ and $\widetilde G\in C(V,S)$ so that $\widetilde F(U)\cap\widetilde G(V)=\varnothing$. Define $\tilde f\in C(U_0^n\cup V_0^{m+k-1})$ by $\tilde f(T^ix)={\widetilde F}_i(x)$ if $x\in U$ and $i=0,\ldots,n$, and $\tilde f(T^iy)={\widetilde G}_i(y)$ if $y\in V$ and $i=0,\ldots,m+k-1$, where we write $\widetilde F=(\widetilde F_0,\ldots,\widetilde F_n)$ and $\widetilde G=(\widetilde G_0,\ldots,\widetilde G_n)$. We have that $\tilde f$ is a perturbation of $f|_W$, where $W=U_0^n\cup V_0^{m+k-1}$. Extend $\tilde f$, applying Lemma \ref{tietze}, to a perturbation $\bar f\in C(X)$ of $f$. Then $\bar f_0^n|_U=\widetilde F$ and, as $\widetilde G(V)\subseteq S$, $\bar f_0^n|_V=\widetilde G$. Thus $\bar f$ observes $T$ on $U\times V$ in $n$ steps, because $\widetilde F(U)\cap\widetilde G(V)=\varnothing$. We conclude that $\Omega$ is dense. Finally, as $U\times V$ is compact $\Omega$ is open.
\end{proof}

\begin{lem}\label{quelindelof}
 Let $M$ be a second-countable space and $\{M_i\}_{i\in I}$ a countable collection of subspaces of $M$. Suppose that for every $p\in M$ there exists $i_p\in I$ and a relative neighborhood $W_p\subseteq M_{i_p}$ of $p$. Then there exists a countable subcover of $\W=\{W_p:p\in M\}$.
\end{lem}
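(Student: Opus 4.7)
The plan is to combine second-countability of $M$ with the countability of the index set $I$ to produce a countable subcollection of $\W$ that still covers $M$. Note first that $\W$ does cover $M$ since $p\in W_p$ for every $p$. The naive attempt — lifting each $W_p$ to an open set $U_p\subseteq M$ with $W_p=M_{i_p}\cap U_p$ and applying the Lindelöf property of $M$ to $\{U_p\}$ — does not work directly, because a countable subcover of the $U_p$'s need not intersect the various $M_{i_p}$'s in a way that covers $M$.

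The key idea is to parametrize candidate relative neighborhoods by pairs (basis element, index) rather than by points. Fix a countable basis $\{B_n\}_{n\in\N}$ of $M$. For each $p\in M$ write $W_p=M_{i_p}\cap U_p$ with $U_p\subseteq M$ open and $p\in U_p$, and pick $n_p\in\N$ with $p\in B_{n_p}\subseteq U_p$. Then
$$p\in B_{n_p}\cap M_{i_p}\subseteq U_p\cap M_{i_p}=W_p.$$

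Next, I would introduce the auxiliary index set
$$\mathcal{N}=\bigl\{(i,n)\in I\times\N : \text{there exists } q\in M \text{ with } i_q=i \text{ and } B_n\cap M_i\subseteq W_q\bigr\},$$
which is countable since $I\times\N$ is. Using the axiom of choice, for each $(i,n)\in\mathcal{N}$ select a witness $q_{i,n}\in M$ with $i_{q_{i,n}}=i$ and $B_n\cap M_i\subseteq W_{q_{i,n}}$. The resulting family $\{W_{q_{i,n}}:(i,n)\in\mathcal{N}\}$ is a countable subcollection of $\W$.

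Finally, I would verify coverage. Given any $x\in M$, the construction in the previous paragraph applied to $p=x$ shows that $(i_x,n_x)\in\mathcal{N}$, with $x$ itself serving as a witness. Hence $W_{q_{i_x,n_x}}$ is in our countable subcollection, and
$$x\in B_{n_x}\cap M_{i_x}\subseteq W_{q_{i_x,n_x}},$$
so $x$ is covered. The only subtlety is that the chosen witness $q_{i_x,n_x}$ need not equal $x$; it is precisely the defining inclusion $B_{n_x}\cap M_{i_x}\subseteq W_{q_{i_x,n_x}}$ that still places $x$ inside the selected neighborhood. This bookkeeping — using the pair $(i_x,n_x)$ as an address instead of the point $x$ — is the main point of the argument, and after that the proof is immediate.
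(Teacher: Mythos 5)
Your argument is correct, and it reaches the conclusion by a slightly different route than the paper. The paper partitions $M$ according to the choice of index, setting $M_i^*=\{p\in M: i_p=i\}$, and then applies the Lindel\"of property of each $M_i^*$ (every subspace of a second-countable space is second countable, hence Lindel\"of) to the cover $\{W_p:p\in M_i^*\}$ by relative neighborhoods, finally taking the countable union over $i$. You instead unwind that Lindel\"of argument by hand: you fix a countable basis $\{B_n\}$ of $M$, address each point by the pair $(i_p,n_p)$, and use a choice function on the countable set of admissible pairs to select the witnesses $W_{q_{i,n}}$; the verification that every $x$ lies in the neighborhood chosen for its address $(i_x,n_x)$ is exactly the standard second-countable-implies-Lindel\"of bookkeeping, done simultaneously for all $i$ via the countable product $I\times\N$. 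So the two proofs rest on the same mechanism; yours is self-contained and avoids quoting hereditary Lindel\"ofness, while the paper's is shorter by invoking it. One small imprecision: since $W_p$ is only a relative neighborhood of $p$ in $M_{i_p}$ (not necessarily relatively open), you cannot in general write $W_p=M_{i_p}\cap U_p$ with $U_p$ open in $M$; you should instead pick an open $U_p\subseteq M$ with $p\in M_{i_p}\cap U_p\subseteq W_p$. This is harmless, because the only thing your argument ever uses is the inclusion $B_{n_p}\cap M_{i_p}\subseteq W_p$, which survives unchanged.
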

\begin{proof}
 For $i\in I$ let $M^*_i=\{p\in M:i_p=i\}\subseteq M_i$ and note that $\{M^*_i\}_{i\in I}$ is a countable cover of $M$. Let $\W_i=\{W_p:p\in M^*_i\}$. Then $\W_i$ is a cover of $M^*_i$ containing a neighborhood (in $M_i$) of every $p\in M^*_i$. As $M^*_i$ is Lindelöf, there exists a countable subcover $\W^{\circ}_i\subseteq\W_i$ of $M^*_i$. Then $\W^{\circ}=\bigcup_{i\in I}\W^{\circ}_i$ is a countable subcover of $\W$.
\end{proof}

\begin{proof}[Proof of Theorem \ref{thmGutmanEndos}]
 The general strategy of the proof is as follows. Let $M=X\times X\setminus \Delta_{2d}(T)$. For each $p\in M$ we will find a set $W\subseteq M$, relative neighborhood of $p$ in a subspace $M_i\subseteq M$ chosen from a fixed finite collection $\{M_i\}$ of subspaces of $M$, satisfying that
  $$\Omega(W)=\{f\in C(X):f\text{ observes }T\text{ on }W\text{ in }2d\text{ steps}\}$$
 is open an dense. Then, by Lemma \ref{quelindelof}, we can cover $M$ with countable many sets $W$, say with $\{W_k\}_{k\in N}$. Therefore, as $\Omega=\bigcap_{k\in \N}\Omega(W_k)$, we conclude that $\Omega$ is a residual set.
 
 Note that if for $p=(x,y)\in M$ there exists a relative neighborhood $W\subseteq M_i$ as explained before then $W^t=\{(b,a):(a,b)\in W\}$ is a relative neighborhood in $M_i^t$ of $p^t=(y,x)$ satisfying the desired properties and \emph{vice versa}. Then for each $p\in M$ we need to take care only of one of the two points $p$ or $p^t$, by adjoining subspaces $M_i^t$ to $\{M_i\}$  if necessary. That is, we can interchange $x$ and $y$ freely along the proof if needed.
 
 For $z\in X$, we denote with 
 $$z_0^k=\{z,\ldots,T^kz\}$$ 
 the $k$-\emph{step orbit} of $z$ and call \emph{length} of $z_0^k$ the number 
 $$\len(z_0^k)=\card(z_0^k)-1\leq k.$$  
 
 Given $(x,y)\in M$ we distinguish six cases according to whether $x_0^{2d}$ and $y_0^{2d}$ meet or not, 
 and to whether their lengths are equal or less than $2d$. 
 
 We enumerate the cases as in the following table. The remaining cases, corresponding to $\len(y_0^{2d})=2d>\len(x_0^{2d})$ and $2d>\len(y_0^{2d})\geq \len(x_0^{2d})$, are reduced to the cases in the last two rows interchanging $x$ and $y$.
 \newcommand{\XX}{{\LARGE\phantom {X$_{_X}$}}}
 \begin{center}
  \begin{tabular}{c|c|c|}   
   \XX                              &$x_0^{2d}\cap y_0^{2d}=\varnothing$  & $x_0^{2d}\cap y_0^{2d}\neq\varnothing$ \\ \hline
   $\len(x_0^{2d})=\len(y_0^{2d})=2d$     &\XX Case 1 \XX                       &\XX Case 2\XX \\ \hline
   $\len(x_0^{2d})=2d>\len(y_0^{2d})$     &\XX Case 3 \XX                       &\XX Case 4\XX \\ \hline 
   $2d>\len(x_0^{2d})\geq \len(y_0^{2d})$ &\XX Case 5 \XX                       &\XX Case 6\XX \\ \hline
  \end{tabular}
 \end{center}

 {\sc Case 1:} As in this case the $2d$-step orbits are disjoint, both of length $2d$, and $T$ is locally injective we can find compact neighborhoods $U$ and $V$ of $x$ and $y$ respectively, such that $U,\ldots,T^{2d}U,$ $V,\ldots,T^{2d}V$ are pairwise disjoint and $T$ is injective when restricted to $U,\ldots,T^{2d-1}U,$ $V,\ldots,T^{2d-1}V$. Then, as $\dim U\leq d$ and $\dim V\leq d$, we can apply Lemma \ref{ObsUV_disjuntosNP} (with $n=2d$) to conclude that $W=U\times V$ is a neighborhood of $p$ (in the subspace $M_1=M$) for which $\Omega(W)$ is open an dense.
 
 {\sc  Case 2:} In this case both $2d$-step orbits has length $2d$ and meet. As $p\in M$ we have $p\notin \Delta_{2d}(T)$, then $x$ and $y$ do not collapse in the same iterate, so we may suppose that $T^{l+k}x=T^ly$ are the first iterates of $x$ and $y$ that match, with $0\leq l<l+k\leq2d$. Then we are in the situation of Figure \ref{fig:caso2} with $n=2d$.
 \thisfloatsetup{capposition=beside, capbesideposition={right,center},floatwidth=.75\textwidth,capbesidewidth=.15\textwidth}
 \begin{figure}[H]
   \includegraphics[width=.4\textwidth]{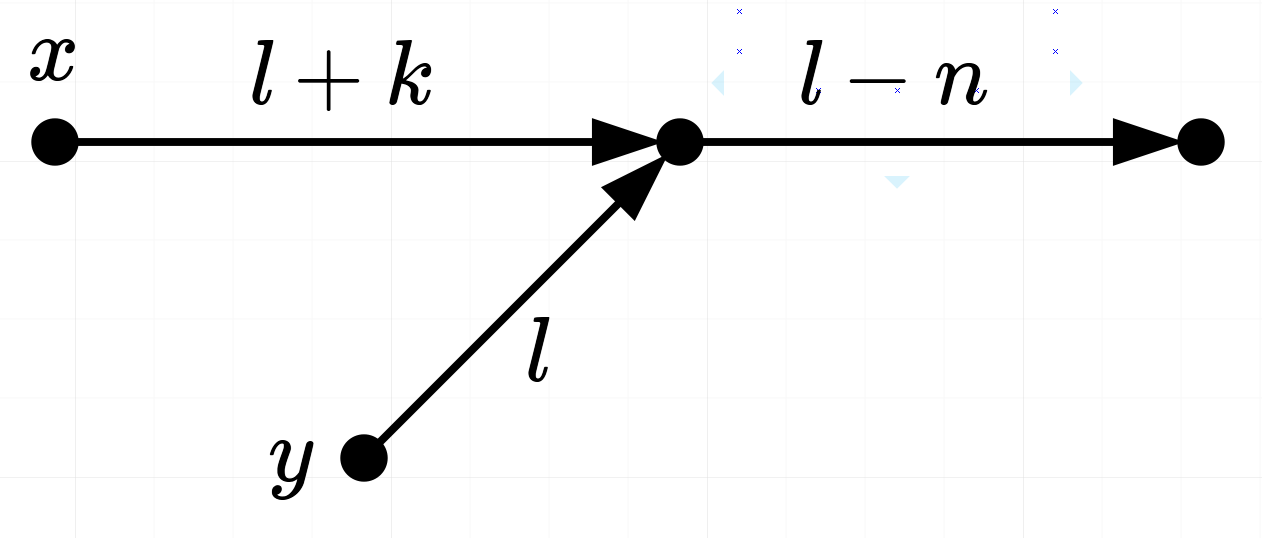}
   \caption{}
   \label{fig:caso2}
  \end{figure}
 As $T$ is a locally injective map there exist compact neighborhoods $U_0$ and $V_0$ of $x$ and $y$ respectively, such that $U_0,\ldots,T^{l+k-1}U_0,$ $V_0,\ldots,T^{2d}V_0$ are pairwise disjoint with $T$ injective on these sets. Let $O=T^{l+k}U_0\cap T^lV_0$, $U=U_0\cap T^{-l-k}O$ and $V=V_0\cap T^{-l}O$. We have that $U$ and $V$ are compact, $T^{l+k}U=T^lV(=O)$, $U,\ldots,T^{l+k-1}U,$ $V,\ldots,T^{2d}V$ are pairwise disjoint and $T$ is injective on these sets. Then, as $\dim U\leq\dim X=d\leq 2d$, we can apply Lemma \ref{ObsUV_grafica} (with $n=2d$) to conclude that $W=U\times V\cap G_{k.l}=U_0\times V_0\cap G_{k.l}$ is a neighborhood of $p$ (in the subspace $M_2=G_{k,l}$) for which $\Omega(W)$ is open and dense. Finally, note that only finitely many subspaces $M_2=G_{k,l}$ are involved here because of the restrictions $0\leq l<l+k\leq2d$.

 {\sc Case 3:} In this case the $2d$-step orbits are disjoint, $\len(x_0^{2d})=2d$ and $\len(y_0^{2d})<2d$. The last condition implies that $y\in H_{m,k}$ with $0\leq k< m+k<2d$. As $T$ is a locally injective map we can take compact neighborhoods $U$ and $V_0$ of $x$ and $y$ respectively, such that $U,\ldots,T^{2d}U,$ $V_0,\ldots,T^{m+k-1}V_0$ are pairwise disjoint and $T$ is injective on these sets. Let $V=V_0\cap H_{m,k}$ and note that $V$ is compact. It is easily checked that the sets $U$ and $V$ are in the hypothesis of Lemma \ref{ObsUV_disjuntosNPvsP} (with $n=2d$), being $\dim U+\dim V\leq 2d$ because $U,V\subset X$ and $\dim X\leq d$. Thus, $W=U\times V$ is a neighborhood of $p$ (in the subspace $M_3=X\times H_{m,k}\setminus \Delta_{2d}(T)$) for which $\Omega(W)$ is open and dense. Finally, observe that only finitely many subspaces $M_3=X\times H_{m,k}\setminus \Delta_{2d}(T)$ are involved in this case because of the restrictions $0\leq k<m+k<2d$.

 {\sc Cases 4 and 6:} These cases are similar to Case 2. In these cases the $2d$-step orbits meet, $\len(y_0^{2d})\leq \len(x_0^{2d})\leq 2d$ and $\len(y_0^{2d})<2d$. As in the previous case, $\len(y_0^{2d})<2d$ implies that $y$ is a preperiodic point, $y\in H_{m,s}$ with $0\leq s< m+s<2d$. In addition, as the $2d$-step orbits meet, $x$ is also a preperiodic point of the same period, $x\in H_{m,r}$ with $0\leq r< m+r\leq2d$. Thus both points $x$ and $y$ reach a common periodic orbit of period $m$ in $r$ and $s$ steps, respectively. 

 We discuss the subcases: (A) $T^rx=T^sy$; (B) $T^rx\neq T^sy$. 

 {\it Subcase A:} In this subcase both orbits meet before they take a step in the common periodic orbit they reach. As in this case $T^rx=T^sy$ we have that $r\neq s$ because $x$ and $y$ do not collapse ($(x,y)\notin \Delta_{2d}(T)$). We may suppose that $r>s$ interchanging $x$ and $y$ if necessary. Let $l\geq0$ and $k>0$ ($k=r-s$) such that $T^{k+l}x=T^ly$ are the first iterates of $x$ and $y$ that match. From this point, say $z$, there are $t\geq0$ steps to reach the periodic orbit ($t=s-l=r-l-k$) at a point that we call $z'$ ($z'=T^rx=T^sy$), and then $m-1$ steps ahead along the periodic orbit until it closes (see Figure \ref{fig:casos46A} (left)). Figure \ref{fig:casos46A} (right) shows\!\!
 \thisfloatsetup{floatwidth=\textwidth,capbesidewidth=.15\textwidth}
 \begin{figure}[h]
   \centering
   \includegraphics[width=.9\textwidth]{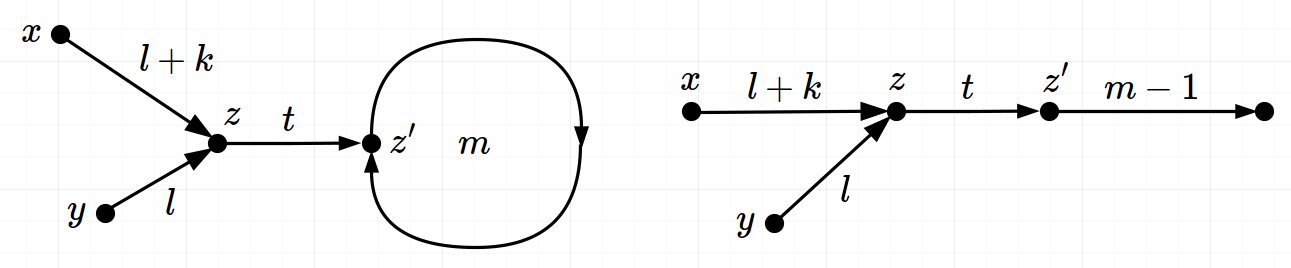}
   \caption{}
   \label{fig:casos46A}
 \end{figure}
 the situation we arrive after deleting the last step before the periodic orbit closes at $z'$ (compare with Figure \ref{fig:caso2}).
 
 As $T$ is a locally injective map there exist compact neighborhoods $U_0$ and $V_0$ of $x$ and $y$ respectively, such that $U_0,\ldots,T^{l+k-1}U_0,$ $V_0,\ldots,T^{s+m-1}V_0$ ($s=l+t$) are pairwise disjoint with $T$ injective on these sets. Let $O=T^{l+k}U_0\cap T^lV_0$, $U=H_{m,r}\cap U_0\cap T^{-l-k}O$ and $V=H_{m,s}\cap V_0\cap T^{-l}O$. We have that $U$ and $V$ are compact, $T^{l+k}U=T^lV$, $U,\ldots,T^{l+k-1}U,$ $V,\ldots,T^{s+m-1}V$ are pairwise disjoint and $T$ is injective on these sets. Then, as $\dim U\leq m-1$ (because $U$ is homeomorphic to $T^rU\subseteq H_m$ and $\dim H_m<\frac m2$) we can apply Lemma \ref{ObsUV_grafica} (with $n=s+m-1$) and conclude that $W=U\times V\cap G_{k,l}=U_0\times V_0\cap G_{k,l}\cap H_{m,r}\times H_{m,s}$ is a neighborhood of $p$ (in the subspace $M_4=H_{m,r}\times H_{m,s}\cap G_{k,l}$) for which $\Omega(W)$ is open and dense ($s+m-1\leq2d$).
 
 {\it Subcase B:} In this subcase the orbits of $x$ and $y$ reach the common periodic orbit at different points $x'=T^rx$ and $y'=T^sy$, respectively (Figure \ref{fig:casos46B} (left)).
 \thisfloatsetup{floatwidth=\textwidth,capbesidewidth=.15\textwidth}
 \begin{figure}[h]
   \centering
   \includegraphics[width=\textwidth]{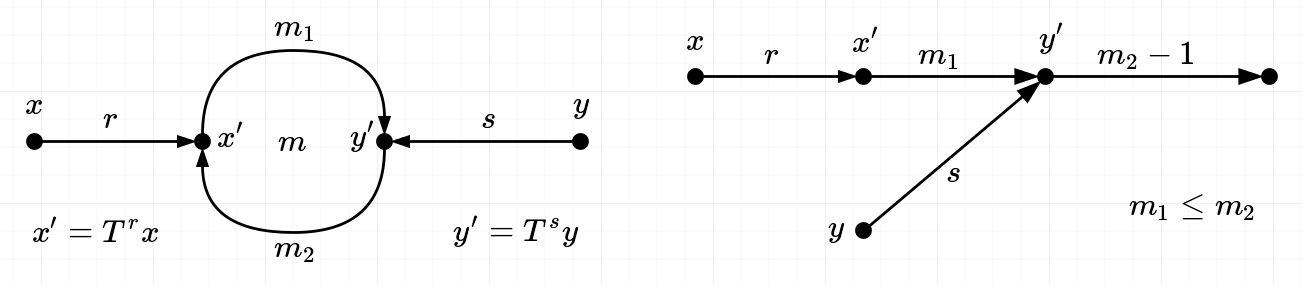}
   \caption{}
   \label{fig:casos46B}
 \end{figure}
 
 Let $m_1>0$ be the number of steps along the periodic orbit form $x'$  to $y'$ and $m_2>0$ number of steps from $y'$ to $x'$ ($m_1+m_2=m$). We have $m_1\geq m/2$ or $m_2\geq m/2$. Interchanging $x$ and $y$ if necessary we can suppose that we are in the second case. Deleting the step in which the periodic orbit closes at $x'$ we arrive to the situation pictured in Figure \ref{fig:casos46B} (right). Comparing $r+m_1$ with $s$ we have two cases: $r+m_1>s$ or $r+m_1<s$ ($x$ and $y$ do not collapse). Suppose we have  $r+m_1>s$ (the other case follows analogously). Let $l\geq0$ and $k>0$ such that $l=s$ and $r+m_1=k+l$. 
 
 As $T$ is a locally injective map there exist compact neighborhoods $U_0$ and $V_0$ of $x$ and $y$ respectively, such that $U_0,\ldots,T^{l+k-1}U_0,$ $V_0,\ldots,T^{s+m_2-1}V_0$ are pairwise disjoint with $T$ injective on these sets. Let $O=T^{l+k}U_0\cap T^lV_0$, $U=H_{m,r}\cap U_0\cap T^{-l-k}O$ and $V=H_{m,s}\cap V_0\cap T^{-l}O$. We have that $U$ and $V$ are compact, $T^{l+k}U=T^lV$, $U,\ldots,T^{l+k-1}U,$ $V,\ldots,T^{s+m_2-1}V$ are pairwise disjoint and $T$ is injective on these sets.
 As $\dim U\leq[\frac{m-1}2]\leq m_2-1$ we can apply Lemma \ref{ObsUV_grafica} (with $n=s+m_2-1]$) and conclude that $W=U\times V\cap G_{k,l}=U_0\times V_0\cap H_{r,m}\times H_{s,m}\cap G_{k,l}$ is a neighborhood of $p$ (in the subspace $M_4=H_{m,r}\times H_{m,s}\cap G_{k,l}$) for which $\Omega(W)$ is open and dense ($s+m_2-1\leq 2d$).
 
 Finally, observe that in both subcases only finitely many subspaces $M_4$ are involved because of the restrictions $0\leq r\leq m+r<2d$, $0\leq s\leq m+s\leq2d$ and $0\leq l<l+k\leq2d$.

 {\sc Case 5:} This case is similar to Case 3. In this case the $2d$-step orbits are disjoint and $\len(y_0^{2d})\leq \len(x_0^{2d})<2d$. The last condition implies that $x\in H_{r,l}$ and $y\in H_{m,k}$ with $0\leq l\leq r+l<2d$, $0\leq k\leq m+k<2d$ and $k+m\leq r+l$. As $T$ is locally injective there exist compact neighborhoods $U_0$ and $V_0$ of $x$ and $y$ respectively, such that $U_0,\ldots,T^{r+l-1}U_0,V_0,\ldots,T^{m+k-1}V_0$ are pairwise disjoint with $T$ injective on these sets. Let $U=U_0\cap H_{r,l}$, $V=V_0\cap H_{m,k}$ and note that $U$ and $V$ are compact, with $\dim U\leq\dim H_r\leq\frac{r-1}2\leq\frac{r+l-1}2$ and similarly $\dim V\leq\frac{m+k-1}2\leq\frac{r+l-1}2$. Then we can apply Lemma \ref{ObsUV_disjuntosNPvsP} (with $n=r+l-1$) to conclude that $\Omega(W)$ is open and dense. Note that $W=U\times V$ is a neighborhood of $p$ in the subspace $M_5=H_{r,l}\times H_{m,k}$. Again in this case, only finitely many subspaces $M_5$ are involved due to the restrictions on $r,l,m$ and $k$.
\end{proof}

\begin{bibdiv}
\begin{biblist}

\bib{AAM}{article}{
author={M. Achigar},
author={A. Artigue},
author={I. Monteverde},
title={Expansive homeomorphisms on non-Hausdorff spaces},
journal={Topology and its Applications},
volume={207}, 
year={2016}, 
pages={109--122}}

\bib{Ae}{article}{
author={D. Aeyels},
title={Generic observability of differentiable systems},
journal={SIAM J. Control and Optimization},
volume={19},
year={1981},
pages={595--603}}

\bib{AH}{book}{
author={N. Aoki},
author={K. Hiraide},
title={Topological theory of dynamical systems},
publisher={North-Holland},
year={1994}}

\bib{Coorn}{book}{
author={M. Coornaert},
title={Topological Dimension and Dynamical Systems},
publisher={Springer},
year={2015}}

\bib{CK}{article}{
author={E. M. Coven},
author={M. Keane},
title={Every compact metric space that supports a positively expansive homeomorphism is finite},
year={2006},
volume={48},
pages={304--305},
journal={IMS Lecture Notes Monogr. Ser., Dynamics \& Stochastics}}


\bib{Gut16}{article}{
title={Takens embedding theorem with a continuous observable},
author={Y. Gutman},
journal={Ergodic Theory, Advances in Dynamical Systems, Edited by Assani, Idris, De Gruyter},
year={2016},
pages={77--107}}

\bib{Gut15}{article}{
author = {Y. Gutman}, 
title = {Mean dimension and Jaworski-type theorems},
volume = {111}, 
number = {4}, 
pages = {831-850}, 
year = {2015},
journal = {Proceedings of the London Mathematical Society}}

\bib{Hi88}{article}{
author={K. Hiraide},
journal={Proceedings of the AMS},
volume={104}, 
year={1988},
title={Positively expansive maps and growth of fundamental groups},
pages={934--941}}

\bib{HW}{book}{
author={W. Hurewicz},
author={H. Wallman},
title={Dimension Theory},
publisher={Princeton Univ. Press}, 
year={1948}}

\bib{Kato93}{article}{
author={H. Kato},
title={Continuum-wise expansive homeomorphisms},
journal={Canad. J. Math.},
volume={45},
number={3},
year={1993},
pages={576--598}}


\bib{Ma}{article}{
author={R. Mañé},
title={Expansive homeomorphisms and topological dimension},
journal={Trans. of the AMS}, 
volume={252}, 
pages={313--319}, 
year={1979}}

\bib{Nerurkar}{article}{
author={M. Nerurkar}, 
title={Observability and Topological Dynamics},
journal={Journal of Dynamics and Differential Equations}, 
volume={3}, 
year={1991},
pages={273--287}}

\bib{Shub69}{article}{
title={Endomorphisms of Compact Differentiable Manifolds},
author={M. Shub},
journal={American Journal of Mathematics}, 
volume={91},
year={1969},
pages={175--199}}

\bib{Takens}{book}{
author={F. Takens},
title={Detecting strange attractors in turbulence},
bookTitle={Dynamical Systems and Turbulence, Warwick 1980: Proceedings of a Symposium Held at the University of Warwick 1979/80},
series={Lecture notes in mathematics},
volume={898},
year={1981},
publisher={Springer Berlin Heidelberg},
pages={366--381}}

\end{biblist}
\end{bibdiv}

\end{document}